\newtheorem{theorem}{Theorem}
\newtheorem{remark}[theorem]{Remark}
\newtheorem{lemma}[theorem]{Lemma}
\newcommand{\Z}{{\mathbb Z}}
\newcommand{\R}{{\mathbb R}}
\newcommand{\T}{{\mathbb T}}
\def\duno{\partial_1}
\def\ddue{\partial_2}
\def\dtre{\partial_3}
\def\dh{\partial_h}
\def\dk{\partial_k}
\def\dbar{\overline\partial}
\def\dt{\partial_t}
\def\div{{\rm div\, }}
\def\eps{\varepsilon}
\def\F{\mathcal F}
\def\G{\mathcal G}
\def\H{\mathcal H}
\title[Incompressible current-vortex sheets]{A priori estimates for 3D incompressible\\ current-vortex sheets}
\author[j.-f. coulombel, a. morando, p. secchi and p. trebeschi]{}
\subjclass[2000]{Primary: 76W05; Secondary: 35Q35, 35L50, 76E17, 76E25, 35R35, 76B03}
\keywords{Magneto-hydrodynamics, incompressible fluids, current-vortex sheets, free boundary, stability}
\email{jean-francois.coulombel@math.univ-lille1.fr}
\email{alessandro.morando@ing.unibs.it}
\email{paolo.secchi@ing.unibs.it}
\email{paola.trebeschi@ing.unibs.it}
\thanks{The authors would like to warmly thank CIRM - FBK in Trento for its kind hospitality during the visiting 
period when this work was initiated. The first author was 
supported by the Agence Nationale de la Recherche, contract ANR-08-JCJC-0132-01. The last three authors were supported by the national research project PRIN 
2007 \lq\lq Equations of Fluid Dynamics of Hyperbolic Type and Conservation Laws\rq\rq. }
\date{\today}
\begin{document}
\maketitle
\centerline{\scshape Jean-Fran\c{c}ois Coulombel}
\smallskip
{\footnotesize
\centerline{CNRS, Universit\'e Lille 1 and Team Project SIMPAF of INRIA Lille Nord Europe}
\centerline{Laboratoire Paul Painlev\'e (UMR CNRS 8524), B\^atiment M2, Cit\'e Scientifique}
\centerline{59655 Villeneuve D'Ascq Cedex, France}
}
\medskip

\centerline{\scshape Alessandro Morando, Paolo Secchi, Paola Trebeschi}
\smallskip
{\footnotesize
\centerline{Dipartimento di Matematica, Facolt\`a di Ingegneria, Universit\`a di Brescia}
\centerline{Via Valotti, 9, 25133 Brescia, Italy}
}
\bigskip


\begin{abstract}
We consider the free boundary problem for current-vortex sheets in ideal incompressible magneto-hydrodynamics. 
It is known that current-vortex sheets may be at most weakly (neutrally) stable due to the existence of surface 
waves solutions to the linearized equations. The existence of such waves may yield a loss of derivatives in 
the energy estimate of the solution with respect to the source terms. However, under a suitable stability 
condition satisfied at each point of the initial discontinuity and a flatness condition on the initial front, 
we prove an a priori estimate in Sobolev spaces for smooth solutions with no loss of derivatives. The result 
of this paper gives some hope for proving the local existence of smooth current-vortex sheets without resorting 
to a Nash-Moser iteration. Such result would be a rigorous confirmation of the stabilizing effect of the magnetic 
field on Kelvin-Helmholtz instabilities, which is well known in astrophysics.
\end{abstract}

\section{Introduction}
\label{sect1}

\subsection{The Eulerian description}

We consider the equations of incompressible magneto-hydrodynamics (MHD), i.e. the equations governing the motion 
of a perfectly conducting inviscid incompressible plasma. In the case of a homogeneous plasma (the density $\rho 
\equiv$ const $>0$), the equations in a dimensionless form read:
\begin{equation}
\label{mhd1}
\begin{cases}
\partial_t u +\nabla \cdot 
(u \otimes u-H\otimes H) +\nabla q =0 \, ,\\
\partial_t H -\nabla \times
(u\times H) =0 \, ,\\
\div u=0\,, \; \div H=0 \, ,
\end{cases}
\end{equation}
where $u =(u_1,u_2,u_3)$ denotes the plasma velocity, $H=(H_1,H_2,H_3)$ is the magnetic field (in Alfv\'en velocity 
units), $q=p+|H|^2/2$ is the total pressure, $p$ being the pressure.

For smooth solutions, system \eqref{mhd1} can be written in equivalent form as
\begin{equation}
\label{euler}
\begin{cases}
\dt u +(u\cdot\nabla)u -(H\cdot\nabla)H  +\nabla q=0\, ,\\
\dt H +(u\cdot\nabla)H -(H\cdot\nabla)u  =0\, ,\\
\div u=0\,, \; \div H=0 \, .
\end{cases}
\end{equation}
We are interested in weak solutions of \eqref{mhd1} that are smooth on either side of a smooth hypersurface  $\Gamma(t)=\{x_3=f(t,x')\}$ in $[0,T]\times\Omega$, where $\Omega\subset\R^3,\, x'=(x_1,x_2)$ and that satisfy 
suitable jump conditions at each point of the front $\Gamma (t)$. For simplicity we assume that the density is 
the same constant on either side of $\Gamma(t)$.

Let us denote $\Omega^\pm(t)=\{x_3\gtrless f(t,x')\}$, where $\Omega=\Omega^+(t)\cup\Omega^-(t)\cup\Gamma(t)$; given any 
function $g$ we denote $g^\pm=g$ in $\Omega^\pm(t)$ and $[g]=g^+_{|\Gamma}-g^-_{|\Gamma}$ the jump across 
$\Gamma (t)$.

We look for smooth solutions $(u^\pm,H^\pm,q^\pm)$ of \eqref{euler} in $\Omega^\pm(t)$ such that $\Gamma (t)$ 
is a tangential discontinuity, namely the plasma does not flow through the discontinuity front and the magnetic 
field is tangent to $\Gamma (t)$, see e.g. \cite{landau}, so that the boundary conditions take the form
\begin{equation*}
\sigma =u^\pm\cdot n \, ,\quad H^\pm\cdot n=0 \, ,\quad [q]=0 \quad {\rm on } \;\Gamma (t) \, .
\end{equation*}
Here $n=n(t)$ denotes the outward unit normal on $\partial\Omega^-(t)$ and $\sigma$ denotes the velocity of 
propagation of the interface $\Gamma (t)$. With our parametrization of $\Gamma (t)$, an equivalent formulation 
of these jump conditions is
\begin{equation}
\label{RH}
\dt f =u^\pm \cdot N \, ,\quad H^\pm \cdot N=0 \, ,\quad [q]=0 \quad {\rm on }\;\Gamma (t) \, ,
\end{equation}
with $N:=(-\duno f,-\ddue f,1)$. Notice that the function $f$ describing the discontinuity front is part of 
the unknown of the problem, i.e. this is a free boundary problem.

System \eqref{euler}, \eqref{RH} is supplemented with initial conditions
\begin{equation}
\begin{array}{ll}\label{initial}
u^\pm(0,x)=u^\pm_0(x) \, ,\quad H^\pm(0,x)=H^\pm_0(x) \, ,\quad x\in \Omega^\pm(0) \, ,\\
f(0,x')=f_0(x') \, ,\quad x'\in \Gamma(0),
\end{array}
\end{equation}
where $\div u^\pm_0=\div H^\pm_0=0$ in $\Omega^\pm(0)$. The aim of this article is to show a priori estimates 
for smooth solutions to \eqref{euler}, \eqref{RH}, \eqref{initial}. This must be seen as a preliminary step 
before proving the existence and uniqueness of solutions to \eqref{euler}, \eqref{RH}, \eqref{initial}.

In the last years there has been a renewed interest for the analysis of free interface problems in fluid dynamics, especially for the Euler equations in vacuum and the water waves problem, see \cite{coutandshkoller1, coutandshkoller3} and the references thereinto. This fact has produced different methodologies for obtaining a priori estimates and the proof of existence of solutions. If the interface moves with the velocity of fluid particles, a natural approach consists in the introduction of Lagrangian coordinates, that reduces the original problem to a new one on a fixed domain. This approach has been recently employed with success in a series of papers by Coutand and Shkoller on the incompressible and compressible Euler equations in vacuum, see \cite{coutandshkoller1, coutandshkoller3}. However, this method seems hardly applicable to problem \eqref{euler}, \eqref{RH}, \eqref{initial}.

In the present paper we follow a different approach. To reduce our free boundary problem to the fixed domain, we consider a change of variables inspired from Lannes \cite{lannes}. The control of the function describing the free interface follows from a stability condition introduced by Trakhinin in \cite{trakhinin05}. The a priori estimate in Sobolev norm of the solution is then obtained by showing the boundedness of a higher-order energy functional.

\subsection{The reference domain $\Omega$}

To avoid using local coordinate charts necessary for arbitrary geometries, and for simplicity, we will 
assume that the space domain $\Omega$ occupied by the fluid is given by
$$
\Omega :=\{(x_1,x_2,x_3)\in\R^3 \; | \; x'=(x_1,x_2) \in \T^2 \, ,x_3\in(-1,1)\} \, ,
$$
where $\T^2$ denotes the $2$-torus, which can be thougt of as the unit square with periodic boundary conditions. 
This permits the use of {\it one} global Cartesian coordinates system. We also set
$$
\Omega^\pm := \Omega \cap \{ x_3\gtrless 0 \} \, ,\qquad \Gamma :=\Omega\cap\{x_3=0\} \, .
$$
On the {\it top} and {\it bottom} boundaries
$$
\Gamma_\pm := \{ (x',\pm 1) \, , \, x' \in \T^2 \}
$$
of the domain $\Omega$, we prescribe the usual boundary conditions
\begin{equation}
\label{bcfixed}
u_3=H_3=0 \quad {\rm on } \; [0,T] \times \Gamma_\pm \, .
\end{equation}
The moving discontinuity front is given by
$$
\Gamma(t) := \{ (x',x_3) \in \T^2 \times \R \, , \, x_3=f(t,x')\} \, ,
$$
where it is assumed that $-1<f(t,\cdot)<1$.

\subsection{An equivalent formulation in the fixed domain $\Omega$}

To reduce the free boundary problem \eqref{euler}, \eqref{RH}, \eqref{initial}, \eqref{bcfixed} to the fixed 
domain $\Omega$, we introduce a suitable change of variables that is inspired from \cite{lannes}. This choice 
is motivated below. In all what follows, $H^s(\omega)$ denotes the Sobolev space of order $s$ on a domain $\omega$. 
We recall that on the torus $\T^2$, $H^s(\T^2)$ can be defined by means of the Fourier coefficients and coincides 
with the set of distributions $u$ such that
\begin{equation*}
\sum_{n \in \Z^2} \big( 1+|n|^2 \big)^s \, |c_n(u)|^2 <+\infty \, ,
\end{equation*}
$c_n(u)$ denoting the $n$-th Fourier coefficient of $u$. The following Lemma shows how to lift functions from 
$\Gamma$ to $\Omega$.

\begin{lemma}[\cite{lannes}]
\label{lemma1}
Let $m \ge 1$ be an integer. Then there exists a continuous linear map $f\in H^{m-0.5}(\Gamma) \mapsto 
\psi \in H^m(\Omega)$ such that $\psi(x',0)=f(x')$ on $\Gamma$, $\psi(x',\pm 1)=0$ on $\Gamma_\pm$, and 
moreover $\dtre \psi (x',0)=0$ if $m\ge 2$.
\end{lemma}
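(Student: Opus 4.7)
The plan is to build the lift by Fourier analysis on the torus $\T^2$, which is naturally adapted to the product geometry $\Gamma=\T^2\times\{0\}$, $\Omega=\T^2\times(-1,1)$. Fix once and for all a cutoff $\chi\in C^\infty_c(\R)$ with $\operatorname{supp}\chi\subset(-1,1)$, $\chi(0)=1$, and $\chi'(0)=0$. Given $f\in H^{m-1/2}(\Gamma)$ with Fourier expansion $f(x')=\sum_{n\in\Z^2}c_n(f)\,e^{in\cdot x'}$, I would define
\begin{equation*}
\psi(x',x_3):=\sum_{n\in\Z^2}c_n(f)\,\chi\bigl(x_3\langle n\rangle\bigr)\,e^{in\cdot x'}, \qquad \langle n\rangle:=(1+|n|^2)^{1/2}.
\end{equation*}
The heuristic is that each Fourier mode of $f$ is extended off the interface on its own natural length scale $1/\langle n\rangle$: the dilation factor $\langle n\rangle$ is precisely what compensates for the half-derivative loss in the trace theorem.

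The prescribed boundary values can then be verified mode by mode. At $x_3=0$ one has $\chi(0)=1$, so $\psi(\cdot,0)=f$; likewise $\chi'(0)=0$ gives $\partial_3\psi(\cdot,0)=0$ once term-by-term differentiation in $x_3$ is justified by the $H^m$ bound below (in particular for $m\ge 2$ as required). Since $\langle n\rangle\ge 1$ for every $n\in\Z^2$ and $\operatorname{supp}\chi\subset(-1,1)$, every term of the series vanishes at $x_3=\pm 1$, and hence $\psi(\cdot,\pm 1)=0$.

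For the continuity estimate, for any multi-index $\alpha=(\alpha',\alpha_3)$ with $|\alpha|\le m$ Plancherel yields
\begin{equation*}
\|\partial^\alpha\psi\|^2_{L^2(\Omega)}=\sum_{n\in\Z^2}|c_n(f)|^2\,|n^{\alpha'}|^2\,\langle n\rangle^{2\alpha_3}\int_{-1}^{1}\bigl|\chi^{(\alpha_3)}(x_3\langle n\rangle)\bigr|^2dx_3.
\end{equation*}
The change of variable $y=x_3\langle n\rangle$ shows that the integral is bounded by $C/\langle n\rangle$ uniformly in $n$, since $\chi^{(\alpha_3)}\in L^2(\R)$. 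Using $|n^{\alpha'}|^2\langle n\rangle^{2\alpha_3}\le \langle n\rangle^{2m}$ and summing over $|\alpha|\le m$ produces
\begin{equation*}
\|\psi\|^2_{H^m(\Omega)}\le C\sum_{n\in\Z^2}(1+|n|^2)^{m-1/2}\,|c_n(f)|^2 = C\,\|f\|^2_{H^{m-1/2}(\Gamma)},
\end{equation*}
giving the continuity; linearity of $f\mapsto\psi$ is clear from the construction.

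I do not anticipate a real obstacle. The one technical point is that $\psi$ is a priori defined as a formal series: I would first carry out the computation for trigonometric polynomials, where everything is smooth and all sums are finite, and then extend to $H^{m-1/2}(\Gamma)$ by density using the estimate just obtained. This simultaneously legitimises the termwise computation of the traces $\psi(\cdot,0)$, $\psi(\cdot,\pm 1)$, and $\partial_3\psi(\cdot,0)$.
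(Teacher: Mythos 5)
Your construction is correct and is in essence the same Fourier-scaled extension that the paper uses: the paper takes $\psi=(1-x_3^2)\,\chi(x_3|D|)f$ with $\chi\in C^\infty_c(\R)$, $\chi\equiv 1$ on $[-1,1]$, and gets the $\langle n\rangle^{-1}$ gain from exactly the same change of variable $y=x_3|n|$ in the $x_3$-integral. Your variant replaces $|n|$ by $\langle n\rangle$ (which spares the separate treatment of the zero mode) and replaces the polynomial prefactor $(1-x_3^2)$ by the hypothesis $\operatorname{supp}\chi\subset(-1,1)$ together with $\langle n\rangle\ge1$; these are cosmetic changes, and the density argument you sketch at the end is the right way to legitimise the termwise trace computations.
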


\noindent For the sake of completeness, we recall the proof of Lemma \ref{lemma1} in Section \ref{prooflemma1} 
at the end of this article. The following Lemma gives the time-dependent version of Lemma \ref{lemma1}.

\begin{lemma}
\label{lemma2}
Let $m \ge1$ be an integer and let $T>0$. Then there exists a continuous linear map $f\in \cap_{j=0}^{m-1} 
{\mathcal C}^j([0,T];H^{m-j-0.5}(\T^2)) \mapsto \psi \in \cap_{j=0}^{m-1} {\mathcal C}^j([0,T];H^{m-j}(\Omega))$ 
such that $\psi(t,x',0)=f(t,x')$, $\psi(t,x',\pm 1)=0$, and moreover $\dtre \psi (t,x',0)=0$ if $m \ge2$. 
Furthermore, there exists a constant $C>0$ that is independent of $T$ and only depends on $m$, such that
\begin{multline*}
\forall \, f\in \cap_{j=0}^{m-1} {\mathcal C}^j([0,T];H^{m-j-0.5}(\T^2)) \, ,\quad 
\forall \, j=0,\dots,m-1 \, ,\quad \forall \, t \in [0,T] \, ,\\
\| \partial_t^j \psi (t,\cdot) \|_{H^{m-j}(\Omega)} \le C \, 
\| \partial_t^j f (t,\cdot) \|_{H^{m-j-0.5}(\T^2)} \, .
\end{multline*}
\end{lemma}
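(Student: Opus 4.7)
The plan is to deduce Lemma \ref{lemma2} from Lemma \ref{lemma1} by applying the spatial lifting operator pointwise in time, exploiting its linearity to commute it with $\partial_t$.

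More precisely, let $R$ denote the continuous linear operator provided by Lemma \ref{lemma1}. The first point I would verify (or simply require, since Lemma \ref{lemma1} is quoted from \cite{lannes} and its proof is reproduced in Section \ref{prooflemma1}) is that the construction of $R$ is, in fact, \emph{universal}: a single operator $R$ can be chosen so that, for every integer $k\ge 1$, $R:H^{k-0.5}(\Gamma)\to H^k(\Omega)$ is bounded, with $R(g)(x',0)=g(x')$, $R(g)(x',\pm1)=0$ and $\partial_3 R(g)(x',0)=0$ as soon as $k\ge 2$. This is typically the case for the Poisson-type/Fourier construction used by Lannes, where $R$ is defined through a single formula on the Fourier side in $x'$ multiplied by an $x_3$-dependent cutoff, and therefore satisfies the $H^{k-0.5}\to H^k$ bound at every level simultaneously.

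Granting this, I would define, for $f\in\bigcap_{j=0}^{m-1}\mathcal{C}^j([0,T];H^{m-j-0.5}(\T^2))$,
\begin{equation*}
\psi(t,\cdot) \,:=\, R\bigl(f(t,\cdot)\bigr),\qquad t\in[0,T].
\end{equation*}
Because $R$ is linear, bounded and independent of $t$, the difference quotients in $t$ commute with $R$, so by induction on $j$,
\begin{equation*}
\partial_t^j \psi(t,\cdot) \,=\, R\bigl(\partial_t^j f(t,\cdot)\bigr),\qquad j=0,\dots,m-1.
\end{equation*}
Applying the bound of Lemma \ref{lemma1} at level $k=m-j$ to the function $\partial_t^j f(t,\cdot)\in H^{m-j-0.5}(\T^2)$ then yields, for some constant $C$ depending only on $m$,
\begin{equation*}
\|\partial_t^j\psi(t,\cdot)\|_{H^{m-j}(\Omega)} \,\le\, C\,\|\partial_t^j f(t,\cdot)\|_{H^{m-j-0.5}(\T^2)},
\end{equation*}
which is exactly the desired estimate, and it is uniform in $T$ because $R$ does not depend on $T$. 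The boundary conditions $\psi(t,x',0)=f(t,x')$, $\psi(t,x',\pm1)=0$ and (for $m\ge 2$) $\partial_3\psi(t,x',0)=0$ hold at each fixed $t$ directly from the properties of $R$. Continuity in $t$ of $\partial_t^j\psi$ with values in $H^{m-j}(\Omega)$ follows from the continuity in $t$ of $\partial_t^j f$ with values in $H^{m-j-0.5}(\T^2)$ and the boundedness of $R$ between these spaces.

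The only genuine obstacle in this argument is the universality claim on $R$: a priori, Lemma \ref{lemma1} only asserts the existence of a lifting for a \emph{fixed} $m$, so applying it at several different orders simultaneously requires either that one inspects the Lannes construction and checks that a single $R$ works for all relevant $k$, or that one constructs $\psi$ as a finite sum $\psi=\sum_{j} R_{m-j}(\chi_j(t)\partial_t^j f)$ using cutoffs in $t$ and distinct operators $R_{m-j}$. The former is the cleaner route and is what I would adopt, referring to the explicit construction recalled in Section \ref{prooflemma1}; everything else in the proof is routine linearity and continuity.
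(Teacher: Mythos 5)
Your argument is correct and follows essentially the same route as the paper: the paper defines $\psi$ through the time-independent Fourier-side formula \eqref{def-f1}, observes that $(\partial_t^j f)^{(1)}=\partial_t^j(f^{(1)})$ by linearity, and applies the Lemma~\ref{lemma1} estimate at each level $m-j$. Your ``universality'' caveat is well placed, and it is resolved exactly as you suggest, by inspecting the explicit construction recalled in Section~\ref{prooflemma1}, which is a single operator valid at every Sobolev order.
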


\noindent The proof of Lemma \ref{lemma2} is also postponed to Section \ref{prooflemma1}. The diffeomorphism 
that reduces the free boundary problem \eqref{euler}, \eqref{RH}, \eqref{initial}, \eqref{bcfixed} to the 
fixed domain $\Omega$ is given in the following Lemma.

\begin{lemma}
\label{lemma3}
Let $m \ge3$ be an integer. Then there exists a numerical constant $\eps_0>0$ such that for all $T>0$, 
for all $f \in \cap_{j=0}^{m-1} {\mathcal C}^j([0,T];H^{m-j-0.5}(\T^2))$ satisfying $\| f \|_{{\mathcal C} 
([0,T];H^{2.5}(\T^2))} \le \eps_0$, the function
\begin{equation}
\label{change}
\Psi(t,x) := \big( x', x_3 +\psi(t,x) \big) \, , \qquad (t,x) \in [0,T]\times \Omega \, ,
\end{equation}
with $\psi$ as in Lemma \ref{lemma2}, defines an $H^m$-diffeomorphism of $\Omega$ for all $t \in [0,T]$. 
Moreover, there holds $\partial^j_t \Psi \in {\mathcal C}([0,T];H^{m-j}(\Omega))$ for $j=0,\dots, m-1$, 
$\Psi(t,x',0)=(x',f(t,x'))$, $\Psi(t,x',\pm 1)=(x',\pm1)$, $\dtre \Psi(t,x',0)=(0,0,1)$, and
\begin{equation*}
\forall \, t \in [0,T] \, ,\quad \| \psi(t,\cdot) \|_{W^{1,\infty}(\Omega)} \le \dfrac{1}{2} \, .
\end{equation*}
\end{lemma}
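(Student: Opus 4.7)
The plan is to obtain $\psi$ from Lemma \ref{lemma2}, use a Sobolev embedding to get a $W^{1,\infty}$-smallness bound on $\psi$ from the smallness of $f$ in $\mathcal{C}([0,T];H^{2.5}(\T^2))$, then read off the diffeomorphism property from the fact that the Jacobian $\det D\Psi = 1 + \partial_3 \psi$ stays bounded away from zero uniformly in $(t,x)$.

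\textbf{Step 1 (construction of $\psi$).} Apply Lemma \ref{lemma2} to $f$ with the given integer $m \ge 3$. This produces $\psi \in \cap_{j=0}^{m-1} \mathcal{C}^j([0,T]; H^{m-j}(\Omega))$ satisfying $\psi(t,x',0)=f(t,x')$, $\psi(t,x',\pm1)=0$, and (since $m\ge 2$) $\partial_3\psi(t,x',0)=0$. Note that the lifting provided by Lemma \ref{lemma2} can be taken to be the \emph{same} linear operator for every $m$ (the construction in Section \ref{prooflemma1} is an explicit Fourier-type extension), so the estimate of Lemma \ref{lemma2} applied at level $m=3$ gives, for every $t\in[0,T]$,
\begin{equation*}
\|\psi(t,\cdot)\|_{H^3(\Omega)} \,\le\, C\,\|f(t,\cdot)\|_{H^{2.5}(\T^2)}.
\end{equation*}

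\textbf{Step 2 ($W^{1,\infty}$ smallness).} Since $\Omega\subset\R^3$, one has the Sobolev embedding $H^3(\Omega)\hookrightarrow W^{1,\infty}(\Omega)$. Combined with the above estimate,
\begin{equation*}
\|\psi(t,\cdot)\|_{W^{1,\infty}(\Omega)} \,\le\, C'\,\|f(t,\cdot)\|_{H^{2.5}(\T^2)} \,\le\, C'\varepsilon_0.
\end{equation*}
Choose the numerical constant $\varepsilon_0:=1/(2C')$. This gives $\|\psi(t,\cdot)\|_{W^{1,\infty}(\Omega)}\le 1/2$ for every $t\in[0,T]$, which is the last estimate of the lemma and, more importantly, implies $|\partial_3\psi|\le 1/2$ pointwise.

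\textbf{Step 3 (diffeomorphism).} Because $\Psi$ leaves the horizontal variables fixed, its Jacobian matrix is lower triangular:
\begin{equation*}
D\Psi(t,x) \,=\, \begin{pmatrix} 1 & 0 & 0 \\ 0 & 1 & 0 \\ \partial_1\psi & \partial_2\psi & 1+\partial_3\psi \end{pmatrix},
\qquad \det D\Psi \,=\, 1+\partial_3\psi \,\ge\, \tfrac{1}{2}.
\end{equation*}
This makes $\Psi$ a local $H^m$-diffeomorphism (the $H^m$ regularity of the inverse follows from the chain rule, $H^m\cap W^{1,\infty}$ being an algebra for $m\ge 3$ in three dimensions, and $1/\det D\Psi$ being bounded). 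To promote this to a global diffeomorphism of $\Omega$, fix $(t,x')$: the one-variable map $x_3\mapsto x_3+\psi(t,x',x_3)$ has derivative at least $1/2$ and satisfies $\pm 1\mapsto \pm 1$, so it is a strictly increasing bijection of $[-1,1]$ onto itself. Varying $x'$, this shows that $\Psi(t,\cdot)$ is a bijection of $\Omega$ onto itself, and a bijection of $\Gamma_\pm$ onto $\Gamma_\pm$; being also a local $H^m$-diffeomorphism, it is a global one.

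\textbf{Step 4 (boundary values and time regularity).} Direct substitution yields $\Psi(t,x',0)=(x',f(t,x'))$, $\Psi(t,x',\pm1)=(x',\pm1)$, and $\partial_3\Psi(t,x',0)=(0,0,1+\partial_3\psi(t,x',0))=(0,0,1)$. The time regularity $\partial_t^j\Psi\in\mathcal{C}([0,T];H^{m-j}(\Omega))$ for $j=0,\dots,m-1$ is inherited from the corresponding property of $\psi$, since $\Psi(t,x)-(x',x_3)=(0,0,\psi(t,x))$.

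The only non-mechanical point is Step 2: the bound on $\|\psi\|_{W^{1,\infty}}$ must be in terms of the $H^{2.5}$-norm of $f$ (not the $H^{m-0.5}$-norm), so one must exploit that the lifting of Lemma \ref{lemma2} is the same linear operator for all admissible $m$ and invoke the borderline Sobolev embedding $H^3(\Omega)\hookrightarrow W^{1,\infty}(\Omega)$ in three space dimensions.
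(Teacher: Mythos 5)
Your proof is correct and follows essentially the same line as the paper: apply Lemma \ref{lemma2} to get $\psi$, use the Sobolev embedding $H^3(\Omega)\hookrightarrow W^{1,\infty}(\Omega)$ to bound $\|\psi(t,\cdot)\|_{W^{1,\infty}}$ by $\|f(t,\cdot)\|_{H^{2.5}(\T^2)}$, and deduce $1+\partial_3\psi\ge 1/2$. The extra details you supply (that the lifting of Lemma \ref{lemma2} is a single operator across all admissible $m$, and the one-variable monotonicity argument for global bijectivity) are correct and merely make explicit what the paper leaves implicit.
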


\begin{proof}[Proof of Lemma \ref{lemma3}]
The proof follows directly from Lemma \ref{lemma2} and the Sobolev imbedding Theorem, because
$$
\dtre \Psi_3(t,x) =1 +\dtre \psi(t,x)\ge 1-\| \psi(t,\cdot) \|_{{\mathcal C}([0,T];W^{1,\infty}(\Omega))} 
\ge 1 -C \, \|f\|_{{\mathcal C}([0,T];H^{2.5}(\T^2))} \ge 1/2 \, ,
$$
provided that $f$ is taken sufficiently small in ${\mathcal C}([0,T];H^{2.5}(\T^2))$. In the latter inequality, 
$C$ denotes a numerical constant. The other properties of $\Psi$ follow directly from Lemma \ref{lemma2}.
\end{proof}

We set
\begin{equation*}
\begin{array}{ll}
A :=[D\Psi]^{-1} & ({\rm inverse\; of\; the\; Jacobian\; matrix}) \, ,\\
J :=\det\, [D\Psi] & ({\rm determinant\; of\; the\; Jacobian\; matrix}) \, ,\\
a :=J\, A & ({\rm transpose\; of\; the\; cofactor\; matrix}) \, ,
\end{array}
\end{equation*}
and we compute
\begin{equation}
\label{defAJa}
A =\begin{pmatrix}
1 & 0 & 0 \\
0 & 1 & 0 \\
-\partial_1\psi/J & -\partial_2\psi/J & 1/J \end{pmatrix} \, ,\qquad J =1 +\partial_3\psi \, ,\qquad 
a =\begin{pmatrix}
J & 0 & 0 \\
0 & J & 0 \\
-\partial_1\psi & -\partial_2\psi & 1 \end{pmatrix} \, .
\end{equation}
We already observe that under the smallness condition of Lemma \ref{lemma3}, all coordinates of $A$ are 
bounded by $2$ and $J \in [1/2;3/2]$. Now we may reduce the free boundary problem \eqref{euler}, \eqref{RH}, 
\eqref{initial}, \eqref{bcfixed} to a problem in the fixed domain $\Omega$ by the change of variables 
\eqref{change}. Let us set
$$
v^\pm(t,x) := u^\pm(t,\Psi(t,x)) \, ,\quad B^\pm(t,x) := H^\pm(t,\Psi(t,x)) \, ,\quad 
Q^\pm(t,x) := q^\pm(t,\Psi(t,x)) \, .
$$
Then system \eqref{euler}, \eqref{RH}, \eqref{initial}, \eqref{bcfixed} can be reformulated on the fixed 
reference domain $\Omega$ as
\begin{equation}
\label{mhd2}
\begin{cases}
\dt v^\pm +(\tilde v^\pm\cdot\nabla)v^\pm -(\tilde B^\pm\cdot\nabla)B^\pm  +A^T \, \nabla Q^\pm=0\, ,\\
\dt B^\pm +(\tilde v^\pm\cdot\nabla)B^\pm -(\tilde B^\pm\cdot\nabla)v^\pm  =0\, ,\\
(A^T \, \nabla) \cdot v^\pm =0\, ,\quad (A^T \, \nabla) \cdot B^\pm =0 \, , & 
{\rm in }\; [0,T] \times \Omega^\pm \, ,\\
\dt f = v^\pm \cdot N \, ,\quad B^\pm \cdot N=0 \, ,\quad [Q]=0 \, , & 
{\rm on }\; [0,T] \times \Gamma \, ,\\
v_3^\pm =B^\pm_3 =0 \, , & {\rm on }\; [0,T] \times \Gamma_\pm \, ,\\
v^\pm_{|t=0} =v^\pm_0 \, ,\quad B^\pm_{|t=0} =B^\pm_0 \, ,& {\rm on }\;  \Omega^\pm \, ,\\
f_{|t=0} =f_0 \, , & {\rm on }\; \Gamma \, .
\end{cases}
\end{equation}
In \eqref{mhd2}, we have set
\begin{equation}
\begin{array}{ll}
\label{defNtilde}
N :=(-\partial_1\psi,-\partial_2\psi,1) \, ,\\
\tilde v := A \, v -(0,0,  \dt \psi/J) =(v_1,v_2, (v\cdot N - \dt \psi)/J) \, ,\quad 
\tilde B := A \, B =(B_1,B_2, B\cdot N/J) \, .
\end{array}
\end{equation}
Vectors are written indifferently in rows or columns in order to simplify the redaction. Notice that
\begin{equation}
\label{bordo}
J =1 \, ,\qquad N=(-\duno f,-\ddue f,1)  \quad {\rm on }\; \Gamma \, ,\qquad 
\tilde v_3=\tilde B_3=0 \quad {\rm on }\; \Gamma \; {\rm and }\; \Gamma_\pm \, .
\end{equation}
We warn the reader that in \eqref{mhd2}, the notation $A^T$ is used to denote the transpose of $A$ and has 
nothing to do with the time interval $[0,T]$ on which the smooth solution is sought. We hope that this does 
not create any confusion.

\subsection{The main result}

\subsubsection{The linearized stability conditions}

The necessary and sufficient linear stability conditions for planar (constant coefficients) current-vortex 
sheets was found a long time ago by Syrovatskii \cite{syrovatskii} and Axford \cite{axford}. Let us consider 
constant vectors $u^\pm,H^\pm$ satisfying \eqref{RH} with the planar front $f(t,x') \equiv \sigma \, t +\xi' 
\cdot x'$ and constant pressures $q^\pm \equiv 0$. (Here we consider for this paragraph that $x'$ belongs to 
$\R^2$ instead of $\T^2$ and $x_3 \in \R$. This is however of no consequence on what follows.) The linear 
stability conditions for such piecewise constant solutions to \eqref{mhd1} read
\begin{subequations}
\label{syrovatskii}
\begin{align}
|[u]|^2 \le 2 \, \Big( |H^+| ^2+ |H^-|^2 \Big) \, ,\label{syrovatskiia}\\
|H^+\times [u]|^2 +|H^-\times [u]|^2 \le 2 \, |H^+\times H^-|^2 \, .\label{syrovatskiib}
\end{align}
\end{subequations}
Under the additional assumption $H^+ \times H^- \neq 0$, then \eqref{syrovatskiia} follows from 
\eqref{syrovatskiib} and the strict inequality in \eqref{syrovatskiia} follows from the strict inequality 
in \eqref{syrovatskiib}. The case of equality in \eqref{syrovatskiib} corresponds to the transition to 
{\it violent} instability, i.e. ill-posedness of the linearized problem. In the region of parameters 
defined by \eqref{syrovatskii}, the associated linearized equations admit surface waves of the form 
$\exp (i\, \tau \, t +i\, \eta \cdot x' -|\eta| \, |x_3|)$ for $\eta \in \R^2 \setminus \{ 0\}$ and 
some suitable $\tau \in \R$, see \cite{syrovatskii,axford} or \cite[page 510]{chandrasekhar}. We also 
refer to \cite{alihunter} for the derivation of weakly nonlinear surface waves.

The interior of the set of parameters described by \eqref{syrovatskii} is defined by the condition
\begin{equation}
\label{syrovatskii2}
|H^+\times [u]|^2 +|H^- \times [u]|^2 < 2\, |H^+\times H^-|^2 \, .
\end{equation}
In particular, $H^+\times H^- \neq 0$ and \eqref{syrovatskiia} becomes {\it redundant}. The condition 
\eqref{syrovatskii2} is always satisfied for {\it current} sheets, i.e. if $[u]=0$ and $H^+\times H^- 
\neq 0$. If $[u] \neq 0$, condition \eqref{syrovatskii2} can be rewritten as
\begin{equation*}
|[u]| <\dfrac{\sqrt{2} \, |H^+| \, |H^-| \, |\sin (\varphi^+-\varphi^-)|}{\sqrt{|H^+|^2 \, \sin^2\varphi^+ 
+|H^-|^2 \, \sin^2\varphi^-}} \, ,
\end{equation*}
where $\varphi^\pm$ denotes the oriented angle between $[u]$ and $H^\pm$.

Under the ``spectral stability condition'' \eqref{syrovatskii2}, Morando, Trakhinin and Trebeschi 
\cite{morandotrakhinintrebeschi} have shown an a priori estimate with a loss of three derivatives for 
solutions to the linearized equations with constant coefficients. In this paper we shall consider the 
following more restrictive situation:
\begin{equation}
\label{syrovatskii3}
\max \Big( |H^+\times [u]|,|H^- \times [u]| \Big) <|H^+\times H^-| \, .
\end{equation}
Under the latter more restrictive stability condition, which represents ``half'' of the stability domain 
defined by \eqref{syrovatskii2}, Trakhinin \cite{trakhinin052} has shown an a priori estimate in the 
anisotropic space $H^1_\ast$, without loss of derivatives from the data, for solutions of the linearized 
incompressible equations with variable coefficients. Similar stability conditions have also been considered 
by Trakhinin for the analysis of linearized and nonlinear stability of {\it compressible} current-vortex 
sheets, see \cite{trakhinin05,trakhinin09arma,ChenWang}. The choice of the space $H^1_\ast$ in 
\cite{trakhinin052} was motivated by the fact that the free boundary $\Gamma(t)$ is characteristic. 
However, we shall prove here that no loss of derivatives in the normal direction to the boundary occurs 
and we shall obtain estimates in standard Sobolev spaces. Though there is no loss of derivatives from 
the source terms of the equations to the solution in the main a priori estimate of \cite{trakhinin052}, 
the regularity assumptions on the coefficients are rather strong (stronger than what we shall assume 
here), and it is not so clear that the estimate in $H^1_\ast$ is sufficient to prove an estimate in 
some $H^m_\ast$, $m$ large enough, with coefficients in the same space $H^m_\ast$. There are even strong 
reasons to believe that with the formulation of \cite{trakhinin052}, a loss of regularity will occur with 
respect to the coefficients of the linearized equations.

Our goal here is to prove a {\it closed} estimate where coefficients are estimated in the same space 
as the data. As a matter of fact, we have found it more convenient to work directly on solutions to the 
nonlinear equations. Since we are considering classical solutions in three space dimensions, our a priori 
estimate will be proved in $H^3(\Omega)$, a space that is imbedded in $W^{1,\infty}$ by the Sobolev 
imbedding Theorem.

\subsubsection{The main result}

For a pair of functions $u =(u^+,u^-) \in H^s(\Omega^+)\times H^s(\Omega^-)$, with real $s\ge 1$, we will 
shortly write
\begin{equation*}
\| u^+ \|_{s,+} :=\| u^+ \|_{H^s(\Omega^+)} \, ,\quad \| u^- \|_{s,-} :=\| u^- \|_{H^s(\Omega^-)} \, ,\quad 
\| u^\pm \|_{s,\pm} := \| u^+ \|_{s,+} +\| u^- \|_{s,-} \, .
\end{equation*}
We also let $| \cdot |_{p,\pm}$ denote the $L^p$ norm on $\Omega^\pm$, and $| \cdot |_p$ denote the $L^p$ 
norm on $\Omega$ for $p\ge1$ and $p\not=2$; the $L^2$ norm on $\Omega^\pm$ is denoted by $\| \cdot ||_\pm$. Our main result reads as follows.

\begin{theorem}
\label{mainthm}
Let $\delta_0 \in \, ]0,1/2]$, let $R>0$, and let $v_0^\pm,B_0^\pm \in H^4(\Omega^\pm)$, $f_0 \in H^{4.5}(\T^2)$ 
satisfy
\begin{align}
\forall \, x' \in \T^2 \, ,\quad |B_0^+ \times B_0^- \, (x',0)| &\ge \delta_0 \, ,\notag \\
\forall \, x' \in \T^2 \, ,\quad \max \big( |B_0^+ \times [v_0] \, (x',0)|,|B_0^- \times [v_0] \, (x',0)| \big) 
&\le (1-\delta_0) \, |B_0^+ \times B_0^- \, (x',0)| \, ,\label{conditionthm} \\
\| v_0^\pm \|_{3,\pm} +\| B_0^\pm \|_{3,\pm} +\| f_0 \|_{H^{3.5}(\T^2)} &\le R \, .\notag
\end{align}
Then there exist $\eps_1>0$, $T_0>0$ and $C_1>0$ that depend only on $\delta_0$ and $R$ such that if 
$\| f_0 \|_{H^{2.5}(\T^2)} \le \eps_1$, then for all solution $(v^\pm,B^\pm,Q^\pm) \in {\mathcal C} 
([0,T];H^4(\Omega^\pm))$, $f \in {\mathcal C}([0,T];H^{4.5}(\T^2))$ to \eqref{mhd2} satisfying (without 
loss of generality)
\begin{equation*}
 \int_{\Omega^-} Q^-(t,x) \, {\rm d}x +\int_{\Omega^+} Q^+(t,x) \, {\rm d}x =0 \, ,
\end{equation*}
for all $t \in [0,T]$, the following estimates hold:
\begin{equation}
\begin{array}{ll}\label{finale}
\| v^\pm(t) \|_{3,\pm} +\| B^\pm(t) \|_{3,\pm} +\| Q^\pm(t) \|_{3,\pm} +&\| f(t) \|_{H^{3.5}(\T^2)} 
\le C_1 \, ,\\
&\| f(t) \|_{H^{2.5}(\T^2)} \le 2 \, \eps_1 \, ,
\end{array}
\end{equation}
for all $t \in [0,\min\{T,T_0\}]$.
\end{theorem}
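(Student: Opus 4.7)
The plan is to define a higher-order energy functional on $[0,T]$ of the form
\begin{equation*}
E(t) := \| v^\pm(t) \|_{3,\pm}^2 +\| B^\pm(t) \|_{3,\pm}^2 +\| Q^\pm(t) \|_{3,\pm}^2 +\| f(t) \|_{H^{3.5}(\T^2)}^2 \, ,
\end{equation*}
and to derive a differential inequality $\frac{d}{dt}E \le \Phi(E)$ on a time interval $[0,T_0]$ small enough so that the smallness of $\|f\|_{H^{2.5}}$ needed by Lemma~\ref{lemma3} persists by continuity; a Grönwall-type closure then gives \eqref{finale}. Throughout, the diffeomorphism $\Psi$, the matrix $A$, the Jacobian $J$ and the cofactor $a$ defined in \eqref{defAJa} are controlled by $\|f\|_{H^{3.5}}$ via Lemmas~\ref{lemma2}--\ref{lemma3}; in particular $J\in[1/2,3/2]$ and $\|A\|_{L^\infty}\le 2$ on the relevant interval.

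First I would obtain the \emph{tangential} energy identity. Applying tangential multi-derivatives $\partial^\alpha=\partial_t^{\alpha_0}\partial_1^{\alpha_1}\partial_2^{\alpha_2}$ of order $|\alpha|\le 3$ to the momentum and induction equations of \eqref{mhd2}, multiplying by $\partial^\alpha v^\pm$ and $\partial^\alpha B^\pm$, integrating over $\Omega^\pm$, summing, and exploiting the skew-symmetric structure of the transport operators $\tilde v^\pm\cdot\nabla$ and $\tilde B^\pm\cdot\nabla$ (plus $[Q]=0$ and the boundary conditions \eqref{bordo}), the pressure and nonlinear bulk terms reduce to commutators that are bounded by $\Phi(E)$; the only non-trivial boundary term on $\Gamma$ reduces, after use of the Rankine--Hugoniot conditions $B^\pm\cdot N=0$ and $\partial_t f=v^\pm\cdot N$, to a quadratic form in the traces of $\partial^\alpha v^\pm$, $\partial^\alpha B^\pm$ and $\partial^\alpha f$. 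The decisive point is that under the strict Trakhinin stability \eqref{conditionthm}, this quadratic form stays non-negative modulo lower-order terms, so the tangential $H^3$ norm is controlled.

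Next I would recover the \emph{normal} derivatives and the pressure. Since the equations \eqref{mhd2} are symmetric-hyperbolic but with a characteristic boundary, I would avoid applying a full normal derivative directly. Instead, from $(A^T\nabla)\cdot v^\pm=(A^T\nabla)\cdot B^\pm=0$ one extracts $\partial_3 v_3^\pm$ and $\partial_3 B_3^\pm$ in terms of tangential derivatives of $(v^\pm,B^\pm)$ and of $\psi$. For the remaining components, one uses the curl of the momentum and induction equations to express $\partial_3$ of the tangential components of $v^\pm,B^\pm$ in terms of quantities already controlled. The total pressure $Q^\pm$ satisfies an elliptic transmission problem obtained by taking the $A^T$-divergence of the first equation in \eqref{mhd2}, with boundary data $[Q]=0$ on $\Gamma$ and Neumann-type data on $\Gamma_\pm$ coming from projecting the momentum equation on $e_3$; standard elliptic regularity on the fixed domain $\Omega^\pm$ yields $\|Q^\pm\|_{3,\pm}\lesssim \Phi(E)$.

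The main obstacle, and the reason the stability condition \eqref{conditionthm} is quantitative rather than merely spectral, is to control $\|f\|_{H^{3.5}(\T^2)}$ \emph{without} loss of derivatives. For this I would use the algebraic identities $B^\pm\cdot N=0$ on $\Gamma$, which form a $2\times 2$ linear system for $(\partial_1 f,\partial_2 f)$ with coefficient matrix whose determinant is (up to sign) $(B^+\times B^-)_3$; the lower bound $|B^+\times B^-|\ge\delta_0/2$ (propagated from $t=0$ by Sobolev embedding and smallness of $T_0$) makes this system invertible with a quantitative constant. Solving gives $\nabla_{x'}f$ in terms of the boundary traces of $B^\pm$, and applying tangential derivatives of order $\le 3$ yields $\|f\|_{H^{3.5}}\lesssim \|B^\pm|_\Gamma\|_{H^3(\T^2)}\lesssim\|B^\pm\|_{3,\pm}^{1/2}\|B^\pm\|_{4,\pm}^{1/2}$-type bounds; the half-derivative loss is recovered by using instead the jump relation $[v]\cdot N$ combined with the second condition in \eqref{conditionthm}, exactly as in Trakhinin \cite{trakhinin052}, which yields an estimate involving only $H^3$ norms of $v^\pm,B^\pm$ plus lower-order terms controlled by $\Phi(E)$. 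Once $\|f\|_{H^{3.5}}$ is absorbed in this way, the differential inequality closes and Grönwall's lemma on the explicit expression for $T_0$ chosen in terms of $\delta_0$ and $R$ delivers both bounds in \eqref{finale}, the smallness of $\|f\|_{H^{2.5}}$ by $\|f(t)\|_{H^{2.5}}\le \|f_0\|_{H^{2.5}}+\int_0^t \|\partial_tf\|_{H^{2.5}}\le\eps_1+C_1 t\le 2\eps_1$ for $t\le T_0$.
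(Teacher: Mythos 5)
There is a genuine gap in the core of your tangential-derivative estimate. When you apply $\dbar^\alpha$ to the momentum equation, multiply by $\dbar^\alpha v^\pm$, and integrate by parts the pressure term, the boundary contribution on $\Gamma$ has the form
\begin{equation*}
\sum_{1\le|\alpha|\le 3}\int_\Gamma \dbar^\alpha Q \,\big[\dbar^\alpha v\cdot N\big]\,{\rm d}x' \, ,
\end{equation*}
which, after using $\dbar^\alpha([v\cdot N])=0$, leaves $\int_\Gamma \dbar^\alpha Q\,\big[\,[\dbar^\alpha;N]\cdot v\,\big]\,{\rm d}x'$. The worst piece is $\int_\Gamma \dbar^\alpha Q\,\big(\dbar^\alpha\nabla' f\big)\cdot[v']\,{\rm d}x'$: both $\dbar^\alpha Q$ and $\dbar^\alpha\nabla' f$ carry the full number of derivatives, the integrand has no sign, and nothing in your argument absorbs it. Your claim that ``the only non-trivial boundary term reduces to a quadratic form in the traces ... [that] stays non-negative modulo lower-order terms'' is not what happens and would not close: this is exactly the loss of derivatives that makes current-vortex sheets delicate. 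The paper resolves it with a \emph{secondary symmetrization}: one estimates the modified energy
$\mathcal{H}(t)=\tfrac12\sum_\pm\sum_{|\alpha|\le3}\int_{\Omega^\pm}\big(|\dbar^\alpha v^\pm|^2+|\dbar^\alpha B^\pm|^2-2\lambda^\pm\,\dbar^\alpha v^\pm\cdot\dbar^\alpha B^\pm\big)\,{\rm d}x$,
with $\lambda^\pm$ chosen on $\Gamma$ to solve $\lambda^+B'^+-\lambda^-B'^-=[v']$. This simultaneously (a) turns the bad boundary term into $\int_\Gamma \dbar^\alpha Q\,\big[\,[\dbar^\alpha;N]\cdot(v-\lambda B)\,\big]\,{\rm d}x'$, whose top-order coefficient $[v'-\lambda B']$ now vanishes so that only subcritical commutators survive, and (b) uses the stability condition \eqref{conditionthm} precisely to guarantee that this Cramer system is solvable with $\|\lambda^\pm\|_{L^\infty}\le 1-\delta_0/2<1$, so the symmetrizer matrix is uniformly positive definite and $\mathcal{H}$ controls the tangential $H^3$ norm. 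Your proposal never introduces this device, and without it the tangential step fails.

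Two further, smaller points. First, your front discussion is muddled: from $B^\pm\cdot N=0$ one inverts the $2\times2$ Cramer system for $\nabla' f$ directly, and since $B^\pm\in H^3(\Omega^\pm)$ traces to $H^{2.5}(\T^2)$ (an algebra), this yields $\|f\|_{H^{3.5}}\lesssim\|B^\pm\|_{3,\pm}$ with no half-derivative loss; the interpolated bound $\|B^\pm\|_{3}^{1/2}\|B^\pm\|_{4}^{1/2}$ you invoke, and the proposed fallback via $[v]\cdot N$, are unnecessary and not how the control is obtained. Second, the curl part is not an algebraic ``extraction of $\partial_3$ of the tangential components'': the paper derives an evolution system for $\zeta=(A^T\nabla)\times v$, $\xi=(A^T\nabla)\times B$ and runs an $H^2$ energy estimate on it, which is essential because the coefficients are only as regular as the solution. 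Your high-level skeleton (tangential + div/curl + elliptic pressure + front + continuity/Grönwall closure) is correct, but the mechanism that makes the tangential step work under \eqref{conditionthm} is the missing centerpiece.
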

Directly from \eqref{mhd2} and \eqref{finale} it readily follows a uniform estimate for $\|\partial_t  v^\pm(t) \|_{2,\pm}$, $\| \partial_t B^\pm(t) \|_{2,\pm}$ and $\| \partial_t f(t) \|_{H^{2.5}(\T^2)}$.

The first two conditions \eqref{conditionthm} are nothing but a uniform version of \eqref{syrovatskii3} on 
the initial front. Then our main result gives a uniform control of solutions to \eqref{mhd2} provided that 
a flatness condition is satisfied by the initial front. The main result also shows that the front remains 
sufficiently flat on a small time interval. The main interest of Theorem \ref{mainthm} is to show that 
energy estimates without loss of derivatives can be proved for \eqref{mhd2} in the framework of standard 
Sobolev spaces. We hope that in a near future, our approach will yield an existence and uniqueness result 
for \eqref{mhd2} without using a Nash-Moser iteration. As far as we know, no existence result has been 
proved yet for \eqref{mhd2}, with or without a Nash-Moser iteration.

\subsubsection{Strategy of the proof}

We consider the following energy functional
\begin{equation}
\label{energy}
{\mathcal E}(t) := \| v^\pm(t),B^\pm(t) \|^2_{3,\pm} +\| Q^\pm(t) \|_{3,\pm}^2 
+\| f(t) \|^2_{H^{3.5}(\T^2)} + \| \dt f(t) \|^2_{H^{2.5}(\T^2)} \, .
\end{equation}
Even though this function is not conserved, it is possible to show that $\sup_{t\in[0,T_0]}{\mathcal E}(t)$ 
remains uniformly bounded for sufficiently smooth solutions to \eqref{mhd2}, whenever $T_0>0$ is taken 
sufficiently small ($T_0$ being independent of the solution that we are considering). The strategy for 
proving Theorem \ref{mainthm} is the following: we first estimate the velocity and magnetic field by 
showing energy estimates on their tangential derivatives (meaning the $\duno$ and $\ddue$ derivatives), 
on their divergence and on their curl. Computing the curl equation is the crucial point if one wants to 
use standard Sobolev spaces (this is one difference with \cite{trakhinin052}). The front $f$ will be 
estimated directly from the boundary conditions in \eqref{mhd2}. Eventually, the pressure will be estimated 
by showing that $Q^\pm$ satisfy an elliptic system with source terms depending only on $v^\pm,B^\pm,f$ which 
have been estimated previously. Then we shall combine all these estimates to show that they yield a uniform 
control of solutions on a time interval that only depends on the size of the initial data.

Not so surprisingly, Theorem \ref{mainthm} requires an additional degree of regularity on the solution 
compared to the space in which we prove the estimate. This technical point is assumed only to justify 
all computations below (integration by parts and so on). This is exactly the same as when one proves a 
priori estimates for solutions to first order hyperbolic problems and in many aspects our analysis is 
closely linked to techniques used in hyperbolic boundary problems with characteristic boundaries. In 
particular, if we believe that coefficients of the differential operators in \eqref{mhd2} should have 
the same regularity as the solution to \eqref{mhd2}, then $A$ should belong to $H^3$ if $v^\pm,B^\pm$ 
belong to $H^3$. This forces the lifting $\psi$ of the front $f$ to belong to $H^4$ and this is where 
it is crucial to gain half-derivative from $f$ to $\psi$. This is the reason why we have adopted the 
same lifting procedure as in \cite{lannes}.

\section{Estimate of tangential derivatives}

\subsection{Uniform control of low order derivatives}

From now on we consider a time $T'>0$ such that we have for our given solution the uniform estimates:
\begin{subequations}
\label{uniform}
\begin{align}
\forall \, t \in [0,T'] \, ,\quad & \| f(t,\cdot) \|_{H^{2.5}(\T^2)} \le \eps_0 \, ,\label{uniforma} \\
&\| v^\pm(t) -v^\pm_0,B^\pm(t)-B^\pm_0 \|_{2,\pm} \le \eps_0 \, ,\label{uniformb}
\end{align}
\end{subequations}
where in \eqref{uniform}, the numerical constant $\eps_0$ is given by Lemma \ref{lemma3}. Let us 
already observe that with our choice of $\eps_0$, \eqref{uniforma} implies
\begin{equation*}
\forall \, (t,x) \in [0,T'] \times \Omega \, ,\quad |\nabla \psi (t,x)| \le \dfrac{1}{2} \, .
\end{equation*}
Moreover, the Sobolev imbedding Theorem implies that the $H^2$ norm dominates the $L^\infty$ norm 
on $\Omega^\pm$ so we can further restrict $\eps_0$, depending only on $\delta_0$, such that the 
following inequalities are implied by \eqref{uniformb}:
\begin{subequations}
\label{uniform'}
\begin{align}
\forall \, (t,x') \in [0,T'] \times \T^2 \, ,\quad &|B^+ \times B^- \, (t,x',0)| \ge 
\dfrac{\delta_0}{2} \, ,\label{uniformc} \\
\forall \, (t,x') \in [0,T'] \times \T^2 \, ,\quad 
&\dfrac{\max \big( |B^+ \times [v] \, (t,x',0)|,|B^- \times [v] \, (t,x',0)| \big)}
{|B^+ \times B^- \, (t,x',0)|} \le 1-\dfrac{\delta_0}{2} \, .\label{uniformd}
\end{align}
\end{subequations}
Of course, the time $T'$ chosen above a priori depends on the particular solution that we are considering, 
and one of our goals is to show below that $T'$ can be chosen to depend only on $\delta_0$ and on the norm 
$R$ of the initial data.

We will denote generic numerical constants (for instance constants that appear in Sobolev imbeddings) 
by the same letter $C$ or by $M_0$. Such constants are allowed to depend only on $\delta_0$ and $R$. 
We also let $F$ denote a generic nonnegative nondecreasing function which does not depend on the solution. 
In particular, we feel free to use $F+F=F$, $F \times F =F$ and so on. We shall sometimes write $u(t)$ 
instead of $u(t,\cdot)$, for some given function $u$ depending on $t$ and $x$. For shortness we shall 
write $\|v^\pm,B^\pm\|_{3,\pm}$ for $\|v^\pm\|_{3,\pm}+\|B^\pm\|_{3,\pm}$, and similarly for 
$\|\dt v^\pm,\dt B^\pm\|_{2,\pm}$ and other quantities. Let us now turn to the derivation of 
$L^2$ estimates for tangential derivatives of the velocity and magnetic field.

\subsection{Estimates of tangential derivatives}

Let us denote by $\overline\partial=(\partial_1,\partial_2)$ the horizontal (tangential) derivatives. Inspired from \cite{trakhinin05,trakhinin052} we define on $[0,T]$ the energy functional
\begin{equation}
\label{energy2}
\displaystyle \H(t) :=\dfrac1{2} \, \sum_\pm \sum_{|\alpha| \le 3} \int_{\Omega^\pm} 
\begin{pmatrix}
1 & -\lambda^\pm\\
-\lambda^\pm & 1 \end{pmatrix} \, \begin{pmatrix}
\dbar^\alpha v^\pm \\
\dbar^\alpha B^\pm \end{pmatrix} \cdot \begin{pmatrix}
\dbar^\alpha v^\pm \\
\dbar^\alpha B^\pm \end{pmatrix} \, {\rm d}x \, ,
\end{equation}
where $\lambda^\pm =\lambda(v^\pm,B^\pm)$ is a ${\mathcal C}^1$ function that will be chosen appropriately 
later on. In particular, the choice of $\lambda^\pm$ will be made so that we have
\begin{equation}
\label{lambdasmall}
\| \lambda^+ \|_{L^\infty([0,T'] \times \Omega^+)} <1 \, , \qquad 
\| \lambda^- \|_{L^\infty([0,T'] \times \Omega^-)} <1 \, ,
\end{equation}
which will imply that the matrix in the integrals defining $\H(t)$ is positive definite (hence we shall 
recover a control of the tangential derivatives of the solution).

We compute the time derivative
\begin{align}
\H'(t) =&\dfrac{1}{2} \, \sum_\pm \sum_{|\alpha| \le 3} \int_{\Omega^\pm} \begin{pmatrix}
0 & -\dt\lambda^\pm\\
-\dt\lambda^\pm & 0 \end{pmatrix} \begin{pmatrix}
\dbar^\alpha v^\pm \\
\dbar^\alpha B^\pm \end{pmatrix} \cdot \begin{pmatrix}
\dbar^\alpha v^\pm \\
\dbar^\alpha B^\pm \end{pmatrix} \, {\rm d}x \notag \\
&+\sum_\pm \sum_{|\alpha| \le 3} \int_{\Omega^\pm} \begin{pmatrix}
1 & -\lambda^\pm \\
-\lambda^\pm & 1 \end{pmatrix} \begin{pmatrix}
\dbar^\alpha\dt v^\pm \\
\dbar^\alpha\dt B^\pm \end{pmatrix} \cdot \begin{pmatrix}
\dbar^\alpha v^\pm \\
\dbar^\alpha B^\pm \end{pmatrix} \, {\rm d}x \notag \\
= &-\sum_\pm \sum_{|\alpha| \le 3} \int_{\Omega^\pm} \dt \lambda^\pm \, 
\dbar^\alpha v^\pm \cdot \dbar^\alpha B^\pm \, {\rm d}x \notag \\
&-\sum_\pm \sum_{|\alpha| \le 3} \int_{\Omega^\pm}\begin{pmatrix}
1 & -\lambda^\pm \\
-\lambda^\pm & 1 \end{pmatrix} \begin{pmatrix}
\dbar^\alpha \left\{ 
(\tilde v^\pm\cdot\nabla)v^\pm-(\tilde B^\pm\cdot\nabla)B^\pm  +A^T\nabla \, Q^\pm \right\}\\
\dbar^\alpha \left\{ (\tilde v^\pm\cdot\nabla)B^\pm-(\tilde B^\pm\cdot\nabla)v^\pm \right\}
\end{pmatrix} \cdot \begin{pmatrix}
\dbar^\alpha v^\pm \\
\dbar^\alpha B^\pm \end{pmatrix} \, {\rm d}x \notag \\
= &\sum_{p=1}^5 \H_p(t) \, ,\label{decompositionH'}
\end{align}
where each term $\H_p$ in the decomposition will be defined below, and we leave as a very simple exercise to 
the reader to check that the sum of all these terms coincides with the time derivative $\displaystyle\H'(t)$. 
We now define and estimate all the terms in the decomposition of $\H'(t)$. We first consider
\begin{equation*}
\H_1(t) := -\sum_\pm \sum_{|\alpha| \le 3} \int_{\Omega^\pm} \dt\lambda^\pm \,
\dbar^\alpha v^\pm \cdot \dbar^\alpha B^\pm \, {\rm d}x \, ,
\end{equation*}
which is trivially estimated by
\begin{equation}
\label{stimaH1}
\forall \, t \in [0,T'] \, ,\quad 
|\H_1(t)| \le C \, {\mathcal E}(t) \, \sum_\pm \|\dt \lambda^\pm\|_{L^\infty (\Omega^\pm)}\, .
\end{equation}
Next we consider some of the terms with the highest number of derivatives. Let us define
\begin{equation*}
\H_2(t) := -\sum_\pm \sum_{|\alpha| \le 3} \int_{\Omega^\pm} \begin{pmatrix}
1 & -\lambda^\pm \\
-\lambda^\pm & 1 \end{pmatrix} \begin{pmatrix}
(\tilde v^\pm\cdot\nabla) \dbar^\alpha v^\pm 
-(\tilde B^\pm\cdot\nabla) \dbar^\alpha B^\pm \\
(\tilde v^\pm\cdot\nabla) \dbar^\alpha B^\pm 
-(\tilde B^\pm\cdot\nabla) \dbar^\alpha v^\pm \end{pmatrix} \cdot \begin{pmatrix}
\dbar^\alpha v^\pm \\
\dbar^\alpha B^\pm \end{pmatrix} \, {\rm d}x \, .
\end{equation*}
This term is estimated by integrating by parts and recalling the boundary condition \eqref{bordo}. We obtain
\begin{multline*}
\H_2(t) = \sum_\pm \sum_{|\alpha| \le 3} \int_{\Omega^\pm} \dfrac{1}{2} \, 
\Big( \div \tilde v^\pm +\div (\lambda^\pm \tilde B^\pm) \Big) 
\left( |\dbar^\alpha v^\pm|^2 +|\dbar^\alpha B^\pm|^2 \right) \\ 
-\left( \div \tilde B^\pm +\div (\lambda^\pm \tilde v^\pm) \right) 
\dbar^\alpha v^\pm  \cdot \dbar^\alpha B^\pm \, {\rm d}x \, ,
\end{multline*}
from which we already get
\begin{equation*}
|\H_2(t)| \le C \, {\mathcal E}(t) \, \sum_\pm \|\div \tilde v^\pm,\div \tilde B^\pm\|_{L^\infty (\Omega^\pm)} 
+\|\div (\lambda^\pm \tilde v^\pm),\div (\lambda^\pm \tilde B^\pm)\|_{L^\infty (\Omega^\pm)} \, .
\end{equation*}
Using the expression of $\tilde v^\pm,\tilde B^\pm$, we get (recall that the estimate \eqref{uniforma} implies 
in particular $1+\dtre \psi \ge 1/2$)
\begin{align*}
\forall \, t \in [0,T'] \, ,\qquad \qquad \|\div \tilde v^\pm,\div \tilde B^\pm\|_{L^\infty (\Omega^\pm)} 
&\le F({\mathcal E}(t)) \, ,\\
\|\div (\lambda^\pm \tilde v^\pm),\div (\lambda^\pm \tilde B^\pm)\|_{L^\infty (\Omega^\pm)} &\le 
F({\mathcal E}(t)) \, \| \lambda^\pm \|_{W^{1,\infty}(\Omega^\pm)} \, .
\end{align*}
We thus end up with
\begin{equation}
\label{stimaH2}
\forall \, t \in [0,T'] \, ,\quad |\H_2(t)| \le F({\mathcal E}(t)) \, 
\Big( 1+ \sum_\pm \| \lambda^\pm \|_{W^{1,\infty}(\Omega^\pm)} \Big) \, .
\end{equation}

Let us now consider the term
\begin{align*}
\H_3(t):= &-\sum_\pm \sum_{|\alpha| \le 3} \int_{\Omega^\pm} \begin{pmatrix}
1 & -\lambda^\pm \\
-\lambda^\pm & 1 \end{pmatrix} \begin{pmatrix}
A^T\nabla \, (\dbar^\alpha Q^\pm) \\
0 \end{pmatrix} \cdot \begin{pmatrix}
\dbar^\alpha v^\pm \\
\dbar^\alpha B^\pm \end{pmatrix} \, {\rm d}x \\
= &-\sum_\pm \sum_{|\alpha| \le 3} \int_{\Omega^\pm} A^T\nabla \, (\dbar^\alpha Q^\pm) 
\cdot \left\{ \dbar^\alpha v^\pm -\lambda^\pm \, \dbar^\alpha B^\pm 
\right\} \, {\rm d}x \, .
\end{align*}
This is the term which requires the most careful analysis. We first observe that the term in the 
sum which corresponds to $\alpha=0$ (no tangential derivative) is estimated in an elementary way by 
Cauchy-Schwarz inequality, and admits an upper bound that is the same as in \eqref{stimaH2}. We thus 
feel free to slightly modify the definition of $\H_3$ and from now on we only consider the sum over 
the multi-indices $\alpha$ satisfying $1 \le |\alpha| \le 3$. A first integration by parts gives (here we 
use Einstein's convention over repeated indices)
\begin{align}
\H_3(t)= &\sum_{1 \le |\alpha| \le 3} \int_{\Gamma} A_{3i} \, \dbar^\alpha Q^+ \, 
\left\{ \dbar^\alpha v^+_i -\lambda^+ \, \dbar^\alpha B^+_i \right\} \, {\rm d}x' 
\notag \\
&-\sum_{1 \le |\alpha| \le 3} \int_{\Gamma_+} A_{3i} \, \dbar^\alpha Q^+ \, 
\left\{ \dbar^\alpha v^+_i -\lambda^+ \, \dbar^\alpha B^+_i \right\} \, {\rm d}x' 
\notag \\
&-\sum_{1 \le |\alpha| \le 3} \int_{\Gamma} A_{3i} \, \dbar^\alpha Q^- \, 
\left\{ \dbar^\alpha v^-_i -\lambda^- \, \dbar^\alpha B^-_i \right\} \, {\rm d}x' 
\label{defh3} \\
&+\sum_{1 \le |\alpha| \le 3} \int_{\Gamma_-} A_{3i} \, \dbar^\alpha Q^- \, 
\left\{ \dbar^\alpha v^-_i -\lambda^- \, \dbar^\alpha B^-_i \right\} \, {\rm d}x' 
\notag \\
&+\sum_\pm \sum_{1 \le |\alpha| \le 3} \int_{\Omega^\pm} \dbar^\alpha Q^\pm \, 
\partial_j \left\{ A_{ji} \, ( \dbar^\alpha v^\pm_i -\lambda^\pm \, 
\dbar^\alpha B^\pm_i) \right\} \, {\rm d}x \, .\notag
\end{align}
Let us notice first that 
\begin{equation*}
A_{3i} \, \{ \dbar^\alpha v^\pm_i -\lambda^\pm \, 
\dbar^\alpha B^\pm_i \}_{|x_3=\pm1} = \dfrac{1}{J} \, \{ 
\dbar^\alpha v^\pm_3 -\lambda^\pm \, \dbar^\alpha B^\pm_3 \}_{|x_3=\pm1} = 0 \, ,
\end{equation*}
because of \eqref{defAJa} and $\psi=v_3^\pm=B^\pm_3=0$ on $[0,T] \times \Gamma_{\pm}$. Therefore the second 
and fourth boundary integrals on $\Gamma_{\pm}$ in \eqref{defh3} vanish identically. As for the two boundary 
integrals on $\Gamma$, from \eqref{defAJa}, \eqref{bordo} and the boundary condition $[Q]=0$ on $\Gamma$ we 
have 
\begin{equation*}
A_{3 \cdot} =N \, ,\qquad [\dbar^\alpha Q]=0 \qquad{\rm on } \; \Gamma \, .
\end{equation*}
Therefore we may rewrite \eqref{defh3} as $\H_3(t) =\H_{31}(t)+\H_{32}(t)$ with
\begin{align}
\H_{31}(t) &:= \sum_{1\le|\alpha| \le 3} \int_\Gamma \dbar^\alpha Q \, \big[ 
(\dbar^\alpha v -\lambda \, \dbar^\alpha B)\cdot N \big] \, {\rm d}x' \, ,\label{h31} \\
\H_{32}(t) &:= \sum_\pm \sum_{1\le|\alpha| \le 3} \int_{\Omega^\pm} \dbar^\alpha Q^\pm \, 
\partial_j\left\{ A_{ji} \, (\dbar^\alpha v^\pm_i -\lambda^\pm \, 
\dbar^\alpha B^\pm_i) \right\} \, {\rm d}x \, ,\label{h32}
\end{align}
where $[\cdot]$ in \eqref{h31} still denotes the jump across $\Gamma$, and $Q$ denotes the common 
trace of $Q^\pm$ on $\Gamma$.

Let us first consider the term $ \H_{31}(t)$, which is where the choice of $\lambda^\pm$ is made. The 
boundary conditions $[v\cdot N] =B^\pm\cdot N =0$ on $\Gamma$ yield $\dbar^\alpha ([v\cdot N]) 
= \dbar^\alpha (B^\pm\cdot N) =0$ on $\Gamma$. Therefore we may write
\begin{equation*}
\H_{31}(t) =-\sum_{1 \le |\alpha| \le 3} \int_\Gamma \dbar^\alpha Q \, \Big[ 
[\dbar^\alpha;N] \cdot v -\lambda \,    [\dbar^\alpha;N] \cdot  B \Big] \, {\rm d}x' \, ,
\end{equation*}
where $[\dbar^\alpha;N]$ denotes the commutator between $\dbar^\alpha$ and the 
multiplication by $N$. This commutator can be written as a sum of the form
\begin{equation*}
[\dbar^\alpha;N] =\dbar^\alpha N +\sum_{1 \le |\beta| \le |\alpha|-1} \star 
\, \dbar^\beta N \, \dbar^{\alpha-\beta} \, ,
\end{equation*}
where $\star$ denotes some harmless numerical coefficient. Let us assume for the time being that we can 
construct $\lambda^\pm$ on $[0,T'] \times \Gamma$ that satisfy
\begin{equation}
\label{linearsystem}
\begin{cases}
\lambda^+ \, B_1^+-\lambda^- \, B^-_1=[v_1] \, ,\\
\lambda^+ \, B_2^+-\lambda^- \, B^-_2=[v_2] \ ,
\end{cases}
\end{equation}
so that $[v'-\lambda B']=0$, where we have set $v':=(v_1,v_2)$ and so on. Then the decomposition of the commutator reduces $\H_{31}(t)$ to
\begin{equation*}
\H_{31}(t) =\sum_{1 \le |\alpha| \le 3} \sum_{1 \le |\beta| \le |\alpha|-1} \star 
\int_\Gamma \dbar^\alpha Q \, \dbar^\beta \nabla' f \cdot \Big( 
\dbar^{\alpha-\beta} v' -\lambda \, \dbar^{\alpha-\beta} B' \Big) \, {\rm d}x' \, ,
\end{equation*}
where we have set $\nabla':=(\duno,\ddue)$ (here the indices $\pm$ do not 
play any role so we feel free to omit them). We now recall the following classical product estimate.

\begin{lemma}
\label{lemma4}
The product mapping $H^{0.5}(\T^2) \times H^{1.5} (\T^2) \longrightarrow H^{0.5}(\T^2)$, $(f,g) \longmapsto 
f\, g$ is continuous.
\end{lemma}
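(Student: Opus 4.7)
The plan is to prove the bilinear estimate by complex interpolation, viewing multiplication by $g$ as a linear operator $T_g : f \mapsto f\,g$. First I would establish the bound $\| T_g \|_{L^2 \to L^2} \leq C \, \| g \|_{H^{3/2}}$ and the bound $\| T_g \|_{H^1 \to H^1} \leq C \, \| g \|_{H^{3/2}}$, and then conclude by interpolating at the midpoint $s = 1/2$.

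For the $L^2$ endpoint, H\"older's inequality gives $\| fg \|_{L^2} \leq \| g \|_{L^\infty} \, \| f \|_{L^2}$, and since $3/2 > 1 = \dim(\T^2)/2$ the Sobolev embedding $H^{3/2}(\T^2) \hookrightarrow L^\infty(\T^2)$ yields $\| g \|_{L^\infty} \leq C \, \| g \|_{H^{3/2}}$. For the $H^1$ endpoint, I would use the product rule $\nabla(f\,g) = g \, \nabla f + f \, \nabla g$ and H\"older to write
\[
\| \nabla (f\,g) \|_{L^2} \leq \| g \|_{L^\infty} \, \| \nabla f \|_{L^2} + \| f \|_{L^4} \, \| \nabla g \|_{L^4} \, ,
\]
and combine this with the $L^2$ bound. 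The borderline two-dimensional Sobolev embeddings $H^1(\T^2) \hookrightarrow L^4(\T^2)$ and $H^{1/2}(\T^2) \hookrightarrow L^4(\T^2)$ (the latter applied to $\nabla g \in H^{1/2}(\T^2)$) then give $\| f \|_{L^4} \leq C \, \| f \|_{H^1}$ and $\| \nabla g \|_{L^4} \leq C \, \| g \|_{H^{3/2}}$, so that altogether $\| T_g f \|_{H^1} \leq C \, \| g \|_{H^{3/2}} \, \| f \|_{H^1}$.

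Finally, since on the torus $H^{1/2}(\T^2) = [L^2(\T^2), H^1(\T^2)]_{1/2}$ via the Fourier characterization and standard interpolation of weighted $\ell^2$ spaces, complex interpolation of the two preceding endpoint bounds delivers the desired estimate $\| fg \|_{H^{1/2}} \leq C \, \| g \|_{H^{3/2}} \, \| f \|_{H^{1/2}}$. The hard part is the $H^1$ endpoint, which relies on \emph{two} critical two-dimensional Sobolev embeddings into $L^4$; this also makes clear why the triple of exponents $(0.5, 1.5, 0.5)$ in the statement is sharp for the dimension $d = 2$, and why a naive pointwise bound by $\| g \|_{L^\infty}$ alone is not enough to preserve half a derivative on~$f$.
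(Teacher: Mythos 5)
Your argument is correct, and it gives a clean, self-contained proof of the lemma. For the record, the paper itself does not supply a proof: it simply declares Lemma~\ref{lemma4} a ``classical product estimate'' and moves on, so there is no in-paper argument to compare against. Your route -- fix $g\in H^{3/2}(\T^2)$, bound the multiplication operator $T_g$ on the two endpoints $L^2$ (via $H^{3/2}(\T^2)\hookrightarrow L^\infty(\T^2)$) and $H^1$ (via the Leibniz rule, $H^1(\T^2)\hookrightarrow L^4(\T^2)$ and $H^{1/2}(\T^2)\hookrightarrow L^4(\T^2)$ applied to $\nabla g$), and then invoke $[L^2,H^1]_{1/2}=H^{1/2}$ on the torus -- is exactly the standard way to prove this estimate, and every step is justified.

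One side remark should be corrected: the claim that the triple of exponents $(0.5,1.5,0.5)$ is \emph{sharp} in dimension $2$ is not true. The map $H^{1/2}(\T^2)\times H^{s_1}(\T^2)\to H^{1/2}(\T^2)$ is in fact continuous for every $s_1>1$ (the general requirement is $s_1\ge 1/2$ together with $s_0+s_1-s>d/2$, which here reads $s_1>1$). What is true is that your particular choice of H\"older pairing $L^4\cdot L^4$ for the term $f\,\nabla g$ happens to saturate the embeddings exactly when $g\in H^{3/2}$; for $1<s_1<3/2$ one would instead take $\nabla g\in H^{s_1-1}\hookrightarrow L^{2/(2-s_1)}$ and $f\in H^1\hookrightarrow L^{2/(s_1-1)}$, and the same interpolation scheme goes through. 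So the endpoint $L^4$ embeddings are not forced by the problem but by the chosen exponent $3/2$, and one should not read any optimality into them.
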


We can now estimate each term in the above decomposition of $\H_{31}(t)$. In the case $|\alpha|-|\beta| 
= 1$, we get (use Lemma \ref{lemma4} for the product estimate and the fact that $H^{1.5}(\T^2)$ is an algebra)
\begin{align*}
\left| \int_{\Gamma} \dbar^\alpha Q \, \dbar^\beta \nabla' f \cdot \Big( 
\dbar^{\alpha-\beta} v' -\lambda \, \dbar^{\alpha-\beta} B' \Big) \, {\rm d}x' \right| 
\le C \, \left\| \dbar^\alpha Q \right\|_{H^{-0.5}(\Gamma)} \left\| 
\dbar^\beta \nabla' f \cdot \big( \dbar^{\alpha-\beta} v' 
-\lambda \, \dbar^{\alpha-\beta} B' \big) \right\|_{H^{0.5}(\Gamma)} &
\\
\le C \, \left\| \nabla Q \right\|_{H^{1.5}(\Gamma)} \, 
\left\| \dbar^\beta \nabla' f \right\|_{H^{0.5}(\T^2)} \, 
\left\| \dbar^{\alpha-\beta} v' -\lambda \, \dbar^{\alpha-\beta} B' 
\right\|_{H^{1.5}(\Gamma)} &
\\
\le F({\mathcal E}(t)) \, \Big( 1+\sum_\pm \| \lambda^\pm \|_{H^{1.5}(\Gamma)} \Big) \, .&
\end{align*}
In the case $|\alpha|-|\beta| \ge 2$, which only happens for $|\alpha|=3$ and $|\beta|=1$, we have
\begin{align*}
\left| \int_{\Gamma} \dbar^\alpha Q \, \dbar^\beta \nabla' f \cdot \Big( 
\dbar^{\alpha-\beta} v' -\lambda \, \dbar^{\alpha-\beta} B' \Big) \, {\rm d}x' \right| 
\le C \, \left\| \dbar^\alpha Q \right\|_{H^{-0.5}(\Gamma)} \left\| 
\dbar^\beta \nabla' f \cdot \big( \dbar^{\alpha-\beta} v' 
-\lambda \, \dbar^{\alpha-\beta} B' \big) \right\|_{H^{0.5}(\Gamma)} &\\
\le C \, \left\| \nabla Q \right\|_{H^{1.5}(\Gamma)} \, 
\left\| \dbar^\beta \nabla' f \right\|_{H^{1.5}(\T^2)} \, 
\left\| \dbar^{\alpha-\beta} v' -\lambda \, \dbar^{\alpha-\beta} B' 
\right\|_{H^{0.5}(\Gamma)} &\\
\le F({\mathcal E}(t)) \, \Big( 1+\sum_\pm \| \lambda^\pm \|_{H^{1.5}(\Gamma)} \Big) \, .&
\end{align*}
Summing all the estimates, we have obtained
\begin{equation}
\label{stimaH31}
\forall \, t \in [0,T'] \, ,\quad |\H_{31}(t)| \le 
F({\mathcal E}(t)) \, \Big( 1+\sum_\pm \| \lambda^\pm \|_{H^{1.5}(\Gamma)} \Big) \, ,
\end{equation}
provided that we can construct $\lambda^\pm$ that satisfy \eqref{linearsystem}. Let us therefore turn to 
the construction of these functions.

We first observe that the boundary conditions \eqref{bordo} give
\begin{equation*}
B_3^\pm = B_1^\pm \, \duno f +B_2^\pm \, \ddue f \, ,\quad 
[v_3] = [v_1] \, \duno f +[v_2] \, \ddue f \, ,\quad \text{\rm on } \; \Gamma \, ,
\end{equation*}
so \eqref{linearsystem} is equivalent to the relation
\begin{equation*}
[v] =\lambda^+ \, B^+ -\lambda^- \, B^- \quad \text{\rm on } \; \Gamma \, .
\end{equation*}
Using the lower bound \eqref{uniformc} on the time interval $[0,T']$, we know that \eqref{linearsystem} 
is a Cramer system (otherwise, $B^+$ and $B^-$ would be colinear). Hence $\lambda^\pm$ are uniquely 
determined on $[0,T'] \times \Gamma$ and have the same regularity as $v^\pm,B^\pm$ on the boundary $\Gamma$. 
Moreover, the latter relations give
\begin{equation*}
|\lambda^\pm (t,x',0)| =\dfrac{|B^\mp \times [v]|}{|B^+ \times B^-|} (t,x',0) \le 1-\dfrac{\delta_0}{2} \, ,
\end{equation*}
where we have used \eqref{uniformd}. As in \cite{trakhinin05,trakhinin052}, we extend $\lambda^\pm$ to 
the domains $\Omega^\pm$ as functions that do not depend on the normal variable $x_3$. Using time or 
tangential differentiation on the system \eqref{linearsystem}, we can easily obtain the estimates
\begin{align}
\forall \, t \in [0,T'] \, ,\quad \| \lambda^\pm \|_{H^{1.5}(\Gamma)} 
+\| \lambda^\pm \|_{W^{1,\infty}(\Omega^\pm)} +\| \dt \lambda^\pm \|_{L^\infty(\Omega^\pm)} &\le 
F({\mathcal E}(t)) \, ,\label{stimalambdapm} \\
\| \lambda^\pm \|_{L^\infty(\Omega^\pm)} &\le 1-\dfrac{\delta_0}{2} \, .\notag
\end{align}
The latter estimates on $\lambda^\pm$ simplify \eqref{stimaH1}, \eqref{stimaH2} and \eqref{stimaH31}, 
and give
\begin{equation}
\label{stimaH1231'}
\forall \, t \in [0,T'] \, ,\quad |\H_1(t)| +|\H_2(t)| +|\H_{31}(t)| \le F({\mathcal E}(t)) \, .
\end{equation}
We emphasize that in the estimate \eqref{stimaH1231'}, the nondecreasing function $F$ depends on $\delta_0$ 
because the estimates on $\lambda^\pm$ depend on $\delta_0$, but $F$ does not depend on the particular 
solution that we are considering.

Let us now consider the term $\H_{32}(t)$ in \eqref{h32}. We decompose $\H_{32}(t)$ as $\H_{32}(t) = 
\H_{321}(t)+\H_{322}(t)$, with
\begin{align*}
\H_{321}(t) &:= \sum_\pm \sum_{1\le|\alpha| \le 3} \int_{\Omega^\pm} \dbar^\alpha Q^\pm \, 
(\partial_j A_{ji}) \, (\dbar^\alpha v^\pm_i -\lambda^\pm \, \dbar^\alpha B^\pm_i) 
\, {\rm d}x \, ,\\
\H_{322}(t) &:= \sum_\pm \sum_{1\le|\alpha| \le 3} \int_{\Omega^\pm} \dbar^\alpha Q^\pm \, 
A_{ji} \, \partial_j (\dbar^\alpha v^\pm_i -\lambda^\pm \, \dbar^\alpha B^\pm_i) 
\, {\rm d}x \, .
\end{align*}
The first term $\H_{321}(t)$ is easily estimated by applying Cauchy-Schwarz inequality and by using 
the $L^\infty$ estimate of $\lambda^\pm$, see \eqref{stimalambdapm}:
\begin{equation}
\label{stimaH321}
\forall \, t \in [0,T'] \, ,\quad | \H_{321}(t)| \le F({\mathcal E}(t)) \, .
\end{equation}
As for $\H_{322}(t)$, since we have the divergence constraint $A_{ji} \, \partial_j v^\pm_i 
=A_{ji} \, \partial_j B^\pm_i =0$, we may write
\begin{equation*}
\H_{322}(t) = -\sum_\pm \sum_{1\le|\alpha|\le3} \int_{\Omega^\pm} \dbar^\alpha Q^\pm \left\{ 
[\dbar^\alpha;A_{ji} \, \partial_j] v^\pm_i 
+A_{ji} \, (\partial_j \lambda^\pm) \, \dbar^\alpha B^\pm_i 
-\lambda^\pm \, [\dbar^\alpha;A_{ji} \, \partial_j] B^\pm_i \right\} \, {\rm d}x \, ,
\end{equation*}
where $[ \cdot ; \cdot ]$ still denotes the commutator. The latter terms are now estimated in a somehow 
brutal way by applying Cauchy-Schwarz inequality. We recall that the $H^4$ norm of $\psi$ is controlled 
by the $H^{3.5}$ norm of $f$ thanks to Lemma \ref{lemma1}, and that commutators in $L^2$ are controlled by 
standard estimates which may be found for instance in \cite[page 295]{benzoni-serre}. Eventually we obtain
\begin{equation*}
\forall \, t \in [0,T'] \, ,\quad |\H_{322}(t)| \le F({\mathcal E}(t)) \, .
\end{equation*}
Combining with \eqref{stimaH321}, and \eqref{stimaH1231'}, we end up with
\begin{equation}
\label{stimaH3}
\forall \, t \in [0,T'] \, ,\quad |\H_1(t)| +|\H_2(t)| +|\H_{3}(t)| \le F({\mathcal E}(t)) \, .
\end{equation}

Going on with the estimate of the terms in the decomposition \eqref{decompositionH'} of $\H'(t)$, we finally 
consider
\begin{equation*}
\H_4(t) := -\sum_\pm \sum_{|\alpha| \le 3} \int_{\Omega^\pm} \begin{pmatrix}
1 & -\lambda^\pm \\
-\lambda^\pm & 1 \end{pmatrix} \begin{pmatrix}
[\dbar^\alpha;\tilde v^\pm\cdot\nabla] v^\pm 
-[\dbar^\alpha;\tilde B^\pm\cdot\nabla] B^\pm \\
[\dbar^\alpha;\tilde v^\pm\cdot\nabla] B^\pm 
-[\dbar^\alpha;\tilde B^\pm\cdot\nabla] v^\pm \end{pmatrix} \cdot \begin{pmatrix}
\dbar^\alpha v^\pm \\
\dbar^\alpha B^\pm \end{pmatrix} \, {\rm d}x \, ,
\end{equation*}
and
\begin{equation*}
\H_5(t) := -\sum_\pm \sum_{|\alpha| \le 3} \int_{\Omega^\pm} 
\Big[ \dbar^\alpha;A^T \nabla \Big] \, Q^\pm \cdot \left\{ 
\dbar^\alpha v^\pm -\lambda^\pm \, \dbar^\alpha B^\pm \right\} \, {\rm d}x \, .
\end{equation*}
Indeed the reader can check that the relation \eqref{decompositionH'} holds with the above definitions 
of $\H_1,\dots,\H_5$. Applying again the classical commutator estimates and using once again the $L^\infty$ 
estimates of $\lambda^\pm$, we have
\begin{equation}
\label{stimaH4}
\forall \, t \in [0,T'] \, ,\quad |\H_4(t)| +|\H_5(t)| \le F({\mathcal E}(t)) \, .
\end{equation}

Combining \eqref{stimaH3} and \eqref{stimaH4}, we have therefore derived the inequality
\begin{equation*}
\forall \, t \in [0,T'] \, ,\quad |\H'(t)| \le F({\mathcal E}(t)) \, ,
\end{equation*}
for a given nonnegative nondecreasing function $F$ that is independent of the solution. Integrating from 
$0$ to $t \in [0,T']$ and using the $L^\infty$ bounds on $\lambda^\pm$, we have already proved our main 
a priori estimate for tangential derivatives:
\begin{equation}
\label{stima-tan}
\forall \, t \in [0,T'] \, ,\quad \sum_{|\alpha| \le 3} 
\big\| \dbar^\alpha v^\pm(t),\dbar^\alpha B^\pm(t) \big\|_\pm^2 
\le M_0 +t \, F(\max_{0 \le s \le t}{\mathcal E}(s)) \, ,
\end{equation}
where $M_0$ is a numerical constant that only depends on $\delta_0$ and $R$ (here we have used 
\eqref{stimalambdapm} to derive a lower bound for the positive definite matrix appearing in the 
definition of the energy functional $\H$).

\section{Divergence and curl estimates for $v$ and $B$}

\subsection{Estimates for the divergence}

In this section we derive suitable estimates for the divergence of $v^\pm,B^\pm$ in $\Omega^\pm$. 
Expanding the divergence constraint for $v^\pm$, we find that for each $t\in[0,T']$, there holds
\begin{equation*}
\duno  v_1^\pm -\dfrac{\duno\psi}{J} \, \dtre v_1^\pm +\ddue v_2^\pm -\dfrac{\ddue\psi}{J} \, \dtre v_2^\pm 
+\dfrac{1}{J} \, \dtre v_3^\pm =0 \quad {\rm in } \; \Omega^\pm \, ,
\end{equation*}
from which the identity
\begin{equation*}
{\rm div} \,v^\pm =\dfrac{\nabla \psi \cdot \dtre v^\pm}{J} \quad {\rm in } \; \Omega^\pm \, 
\end{equation*}
readily follows. Since $H^2(\Omega^\pm)$ is an algebra, we get
\begin{equation*}
\forall \, t \in [0,T'] \, ,\quad \| {\rm div} \, v^\pm(t) \|_{2,\pm} \le 
C \, \left\| \dfrac{\nabla \psi}{J} (t) \right\|_2 \, \| \dtre v^\pm(t) \|_{2,\pm} 
\le C \, \| f(t) \|_{H^{2.5}(\T^2)} \, \| v^\pm(t) \|_{3,\pm} \, .
\end{equation*}
The analogue estimate for the divergence of $B^\pm$ is obtained by following the same lines, and we have 
thus proved the a priori estimate
\begin{equation}
\label{div-est}
\forall \, t \in [0,T'] \, ,\quad \| {\rm div} \, v^\pm(t),{\rm div} \, B^\pm(t) \|_{2,\pm} \le 
C_0 \, \| f(t) \|_{H^{2.5}(\T^2)} \, \| v^\pm(t),B^\pm(t) \|_{3,\pm} \, .
\end{equation}

\subsection{Estimates for the curl}

In order to estimate the curl of $v^\pm,B^\pm$ we proceed as follows. Let us introduce the curl of 
the Eulerian velocity and magnetic fields $u,H$
\begin{equation*}
\tilde\zeta := {\rm curl} \, u \, ,\quad \tilde\xi := {\rm curl} \, H \, ,
\end{equation*}
and set
\begin{equation}
\label{lagrangeancurl}
\begin{cases}
\zeta := \tilde\zeta \circ \Psi =({\rm curl} \, u) \circ \Psi 
= (A^T \nabla) \times (u \circ \Psi) = (A^T \nabla) \times v \, ,\\
\xi := \tilde\xi \circ \Psi = ({\rm curl} \, H) \circ \Psi 
= (A^T \nabla) \times (H \circ \Psi) = (A^T \nabla) \times B \, .
\end{cases}
\end{equation}
Using the definition of the matrix $A$ in \eqref{defAJa}, the relations \eqref{lagrangeancurl} can be 
easily inverted to find
\begin{equation}\label{lagrangeancurl-inv}
{\rm curl} \, v =\zeta +\dfrac{\nabla \psi \times \dtre v}{J} \, ,\quad 
{\rm curl} \, B =\xi +\dfrac{\nabla \psi \times \dtre B}{J} \, .
\end{equation}
Applying the curl operator to the original equations \eqref{euler} satisfied by $(u,H)$, we easily find 
that the Eulerian curls $(\tilde\zeta,\tilde\xi)$ solve the system
\begin{equation*}
\begin{cases}
\dt \tilde\zeta^\pm +(u^\pm \cdot \nabla) \tilde\zeta^\pm -(H^\pm \cdot \nabla) \tilde\xi^\pm 
-(\tilde\zeta^\pm \cdot \nabla) u^\pm +(\tilde\xi^\pm \cdot \nabla) H^\pm =0 \, ,\\
\dt \tilde\xi^\pm +(u^\pm \cdot \nabla) \tilde\xi^\pm -(H^\pm \cdot \nabla) \tilde\zeta^\pm 
+[{\rm curl};u^\pm \cdot \nabla] H^\pm -[{\rm curl};H^\pm \cdot \nabla] u^\pm =0 \, ,
\end{cases}
\end{equation*}
in $\bigcup\limits_{t\in[0,T]} \{ t \} \times \Omega^\pm(t)$. Making use of \eqref{lagrangeancurl} and 
recalling the definitions in \eqref{defNtilde}, it follows that $(\zeta,\xi)$ solve
\begin{equation}
\label{equcurl}
\begin{cases}
\dt \zeta^\pm + (\tilde{v}^\pm \cdot \nabla) \zeta^\pm -(\tilde{B}^\pm \cdot \nabla) \xi^\pm 
-(A \, \zeta^\pm \cdot \nabla) v^\pm +(A\, \xi^\pm \cdot \nabla)B^\pm =0 \, ,\\
\dt \xi^\pm +(\tilde{v}^\pm \cdot \nabla) \xi^\pm -(\tilde{B}^\pm \cdot \nabla) \zeta^\pm 
+[A^T \nabla \times;A\, v^\pm \cdot \nabla] B^\pm -[A^T \nabla \times;A\, B^\pm \cdot \nabla] v^\pm =0 \, ,
\end{cases}
\end{equation}
in $[0,T] \times \Omega^\pm$. Thus, in order to estimate the curl of $v^\pm,B^\pm$, we are reduced, after 
\eqref{lagrangeancurl-inv}, to proving suitable bounds for the $H^2-$norm of the solution $(\zeta,\xi)$ 
to \eqref{equcurl}. Let us observe that with our regularity assumptions on the original solution, there 
holds $(\zeta,\xi) \in {\mathcal C}^1(H^2) \cap {\mathcal C}(H^3)$ so all integration by parts below are 
legitimate.

Let us introduce an associated energy functional ${\mathcal K}$ defined by
\begin{equation}
\label{energy-curl}
{\mathcal K}(t) := \dfrac{1}{2} \, \sum_\pm \sum_{|\beta| \le 2} \int_{\Omega^\pm} \left\{ 
|\partial^\beta \zeta^\pm(t)|^2 +|\partial^\beta \xi^\pm(t)|^2\right\} \, {\rm d}x \, .
\end{equation}
Differentiating with respect to $t$ and making use of \eqref{defAJa}, \eqref{defNtilde}, \eqref{equcurl} gives
\begin{equation}
\label{energy-curl1}
{\mathcal K}'(t) =\sum_\pm \sum_{|\beta| \le 2} \int_{\Omega^\pm} \left\{ 
\partial^\beta \dt \zeta^\pm \cdot \partial^\beta \zeta^\pm 
+\partial^\beta \dt \xi^\pm \cdot \partial^\beta \xi^\pm \right\} \, {\rm d}x 
={\mathcal K}_1(t) +{\mathcal K}_2(t) +{\mathcal K}_3(t) \, ,
\end{equation}
where
\begin{align*}
{\mathcal K}_1(t) := -\sum_\pm \sum_{|\beta| \le 2} \int_{\Omega^\pm} &\left\{ 
(\tilde{v}^\pm \cdot \nabla) \partial^\beta \zeta^\pm  
-(\tilde{B}^\pm \cdot \nabla) \partial^\beta \xi^\pm \right\} \cdot \partial^\beta \zeta^\pm \\
&+\left\{ (\tilde{v}^\pm \cdot \nabla) \partial^\beta \xi^\pm 
-(\tilde{B}^\pm \cdot \nabla) \partial^\beta \zeta^\pm \right\} \cdot \partial^\beta\xi^\pm \, {\rm d}x\, ,\\
{\mathcal K}_2(t) := -\sum_\pm \sum_{|\beta| \le 2} \int_{\Omega^\pm} &\left\{ 
[\partial^\beta;\tilde{v}^\pm \cdot \nabla] \zeta^\pm 
-[\partial^\beta;\tilde{B}^\pm \cdot \nabla] \xi^\pm \right\} \cdot \partial^\beta \zeta^\pm \\
&+\left\{ [\partial^\beta;\tilde{v}^\pm \cdot \nabla] \xi^\pm 
-[\partial^\beta;\tilde{B}^\pm \cdot \nabla] \zeta^\pm \right\} \cdot \partial^\beta \xi^\pm \, {\rm d}x \, ,\\
{\mathcal K}_3(t) := -\sum_\pm \sum_{|\beta| \le 2} \int_{\Omega^\pm} 
&\partial^\beta \left( (A\xi^\pm \cdot \nabla) B^\pm -(A\zeta^\pm \cdot \nabla) v^\pm \right) \cdot 
\partial^\beta \zeta^\pm \\
&+\partial^\beta \left( \left[ A^T \nabla \times;Av^\pm \cdot \nabla \right] B^\pm 
-\left[ A^T \nabla \times;AB^\pm \cdot \nabla \right] v^\pm \right) \cdot \partial^\beta \xi^\pm \, {\rm d}x \, .
\end{align*}
Let us estimate separately each of the above ${\mathcal K}_i$, for $i=1,2,3$. We start with ${\mathcal K}_1$. 
To estimate this term, we use Leibniz' rule and integrate by parts. The boundary conditions \eqref{bordo} give
\begin{align*}
{\mathcal K}_1(t) &= -\dfrac{1}{2} \, \sum_\pm \sum_{|\beta| \le 2} \int_{\Omega^\pm} 
\left\{ \tilde{v}^\pm \cdot \nabla \left( |\partial^\beta\zeta^\pm|^2 +|\partial^\beta\xi^\pm|^2 \right) 
-2\, \tilde{B}^\pm \cdot \nabla \left( \partial^\beta \xi^\pm \cdot \partial^\beta \zeta^\pm \right) 
\right\} \, {\rm d}x \\
&= \sum_\pm \sum_{|\beta| \le 2} \int_{\Omega^\pm} \left\{ \dfrac{1}{2} \, {\rm div} \, \tilde{v}^\pm \, 
\left( |\partial^\beta \zeta^\pm|^2 +|\partial^\beta \xi^\pm|^2 \right) -{\rm div} \, \tilde{B}^\pm \, 
\partial^\beta \xi^\pm \cdot \partial^\beta \zeta^\pm \right\} \, {\rm d}x \, .
\end{align*}
Applying Cauchy-Schwarz inequality, we obtain
\begin{equation}
\label{curl-est1}
\forall \, t \in [0,T'] \, ,\quad |{\mathcal K}_1(t)| \le F({\mathcal E}(t)) \, .
\end{equation}

Let us now deal with the term ${\mathcal K}_2$. We focus on the first integral involved in the definition 
of ${\mathcal{K}}_2$, namely
\begin{equation*}
\sum_{|\beta| \le 2} \int_{\Omega^\pm} [\partial^\beta;\tilde v^\pm \cdot \nabla] \zeta^\pm 
\cdot \partial^\beta \zeta^\pm \, {\rm d}x \, .
\end{equation*}
In the sequel $\partial^1$ and $\partial^2$ stand for any derivative of order one and order two respectively. The commutator is zero if $\beta=0$. If $|\beta|=1$, the integral is of the form
\begin{equation*}
\int_{\Omega^\pm} \partial^1 \zeta^\pm \, \partial^1 \tilde v^\pm \, \partial^1 \zeta^\pm \, {\rm d}x \, .
\end{equation*}
Using 
an $L^\infty$ bound for $\partial^1 \tilde v^\pm$ and Cauchy-Schwarz for the two remaining terms, we have
\begin{equation*}
\left| \int_{\Omega^\pm} \partial^1 \zeta^\pm \, \partial^1 \tilde v^\pm \, \partial^1 \zeta^\pm \, {\rm d}x 
\right| \le F({\mathcal E}(t)) \, .
\end{equation*}
It remains to examine the terms in the commutator with $|\beta| =2$. We can easily check that such a commutator 
can be rewritten as a sum of the form (we omit the harmless numerical constants)
\begin{equation*}
\int_{\Omega^\pm} \partial^1 \tilde v^\pm \, \partial^2 \zeta^\pm \, \partial^2 \zeta^\pm 
+\partial^2 \tilde v^\pm \, \partial^1 \zeta^\pm \, \partial^2 \zeta^\pm \, {\rm d}x \, .
\end{equation*}
The first term is estimated as in the case $|\beta|=1$ by using an $L^\infty$ bound for $\partial^1 \tilde v^\pm$. 
The second of these two terms requires more attention. We combine H\"older's inequality and the Sobolev imbedding 
Theorem (recall that in three space dimensions $H^1$ is imbedded in $L^6$):
\begin{equation*}
\left| \int_{\Omega^\pm} \partial^2 \tilde v^\pm \, \partial^1 \zeta^\pm \, \partial^2 \zeta^\pm \, {\rm d}x 
\right| \le |\partial^2 \tilde v^\pm|_{3,\pm} \, |\partial^1 \zeta^\pm|_{6,\pm} \, 
\| \partial^2 \zeta^\pm \|_{\pm} 
\le C \, \| \tilde v^\pm \|_{3,\pm} \, \| \zeta^\pm \|_{2,\pm}^2 \le F({\mathcal E}(t)) \, .
\end{equation*}
In a completely similar way, we can handle the other commutators in ${\mathcal K}_2(t)$ to finally get the 
estimate
\begin{equation}
\label{curl-est2}
\forall \, t \in [0,T'] \, ,\quad |{\mathcal K}_2(t)| \le F({\mathcal E}(t)) \, .
\end{equation}

We now turn to the last term ${\mathcal K}_3$, that we write in the form ${\mathcal K}_3(t) 
={\mathcal K}_{31}(t) +{\mathcal K}_{32}(t)$ with
\begin{align*}
{\mathcal K}_{31}(t) &:= -\sum_\pm \sum_{|\beta| \le 2} \int_{\Omega^\pm} 
\partial^\beta \left( (A\xi^\pm \cdot \nabla) B^\pm -(A\zeta^\pm \cdot \nabla) v^\pm \right) 
\cdot \partial^\beta \zeta^\pm \, {\rm d}x \, ,\\
{\mathcal K}_{32}(t) &:= -\sum_\pm \sum_{|\beta| \le 2} \int_{\Omega^\pm} 
\partial^\beta \left\{ [A^T \nabla \times;Av^\pm \cdot \nabla] B^\pm 
-[A^T \nabla \times;AB^\pm \cdot \nabla] v^\pm \right\} \cdot \partial^\beta \xi^\pm \, {\rm d}x \, .
\end{align*}
The first integral in ${\mathcal K}_{31}(t)$ are estimated by Cauchy-Schwarz inequality and by using the fact 
that $H^2(\Omega^\pm)$ is an algebra:
\begin{multline*}
\left| \int_{\Omega^\pm} \partial^\beta \left( (A\xi^\pm \cdot \nabla) B^\pm \right) \cdot 
\partial^\beta \zeta^\pm \, {\rm d}x \right| 
\le \| \zeta^\pm \|_{2,\pm} \, \| (A\xi^\pm \cdot \nabla) B^\pm \|_{2,\pm} \\
\le \| \zeta^\pm \|_{2,\pm} \, \| A \|_2 \, \| \xi^\pm \|_{2,\pm} \, \| B^\pm \|_{3,\pm} 
\le F({\mathcal E}(t)) \, .
\end{multline*}
The second integral in ${\mathcal K}_{31}(t)$ is estimated in the same way and we get
\begin{equation}
\label{curl-est3}
\forall \, t \in [0,T'] \, ,\quad |{\mathcal K}_{31}(t)| \le F({\mathcal E}(t)) \, .
\end{equation}
As for ${\mathcal K}_{32}(t)$, it is rather easy to see that the quantity  $[A^T \nabla \times;Av^\pm 
\cdot \nabla] B^\pm -[A^T \nabla \times;AB^\pm \cdot \nabla] v^\pm$ can be expanded as a sum of terms 
of the form
\begin{equation*}
A \, \partial^1 A \, v^\pm \, \partial^1 B^\pm +A\, \partial^1 A \, B^\pm \, \partial^1 v^\pm 
+A\, A\, \partial^1 v^\pm \, \partial^1 B^\pm \, ,
\end{equation*}
where we have disregarded the indices for the sake of simplicity. Hence the $H^2$ norm of this quantity 
can be estimated by a quantity of the form $F({\mathcal E}(t))$. Using Cauchy-Schwarz inequality in 
${\mathcal K}_{32}(t)$, we end up with
\begin{equation*}
\forall \, t \in [0,T'] \, ,\quad |{\mathcal K}_{32}(t)| \le F({\mathcal E}(t)) \, .
\end{equation*}
Combining the latter estimate with \eqref{curl-est1}, \eqref{curl-est2} and \eqref{curl-est3}, we have 
obtained
\begin{equation*}
\forall \, t \in [0,T'] \, ,\quad |{\mathcal K}'(t)| \le F({\mathcal E}(t)) \, .
\end{equation*}
We can now integrate this inequality from $0$ to $t$ and use \eqref{lagrangeancurl-inv}. The ``error'' terms 
$\nabla \psi \times \dtre v_3^\pm/J$, $\nabla \psi \times \dtre B_3^\pm/J$ are estimated as in the paragraph on the divergence estimate, see 
\eqref{div-est}, so eventually we get
\begin{multline}
\label{curl-est}
\forall \, t \in [0,T'] \, ,\quad \| {\rm curl} \, v^\pm(t),{\rm curl} \, B^\pm(t) \|_{2,\pm}^2 \le M_0 \\
+t \, F(\max_{0 \le s \le t}{\mathcal E}(s)) 
+C_0 \, \| f(t) \|_{H^{2.5}(\T^2)}^2 \, \| v^\pm(t),B^\pm(t) \|_{3,\pm}^2 \, .
\end{multline}

\subsection{Final estimate for the velocity and magnetic field}

With the above divergence and curl estimates, we are ready to obtain the main a priori estimate for the 
velocity and magnetic field in each domain $\Omega^\pm$. The only point is to observe, through elementary 
algebraic manipulations, that the $H^3$ norm of a vector field is controlled by the $L^2$ norms of tangential 
derivatives of order $\le 3$ and by the $H^2$ norms of its divergence and of its curl. We thus add the 
estimates \eqref{stima-tan}, \eqref{div-est} and \eqref{curl-est} to obtain
\begin{equation*}
\forall \, t \in [0,T'] \, ,\quad \| v^\pm(t),B^\pm(t) \|_{3,\pm}^2 \le M_0 
+t \, F(\max_{0 \le s \le t}{\mathcal E}(s)) 
+C_0 \, \| f(t) \|_{H^{2.5}(\T^2)}^2 \, \| v^\pm(t),B^\pm(t) \|_{3,\pm}^2 \, ,
\end{equation*}
where, of course, the numerical constants $M_0,C_0$ are independent of the solution. Consequently, up to 
choosing $\eps_0$ small enough so that $C_0 \, \eps_0 \le 1/2$ and adapting the time interval $[0,T']$ so 
that \eqref{uniforma} is valid with the new definition of $\eps_0$, we obtain
\begin{equation}
\label{stima-vB}
\forall \, t \in [0,T'] \, ,\quad \| v^\pm(t),B^\pm(t) \|_{3,\pm}^2 \le M_0 
+t \, F(\max_{0 \le s \le t}{\mathcal E}(s)) \, .
\end{equation}

\section{Estimate of the front}

From the linear system of the boundary conditions on $\Gamma$
\begin{equation}
\label{systemgradf}
\begin{cases}
B_1^+ \, \duno f +B^+_2 \, \ddue f =B_3^+ \, ,\\
B_1^- \, \duno f +B^-_2 \, \ddue f =B_3^- \, ,
\end{cases}
\end{equation}
we have already seen that the determinant $B_1^+ \, B_2^- -B_2^+ \, B_1^-$ does not vanish on 
$[0,T'] \times \Gamma$. More precisely, we have
\begin{equation*}
|B_1^+ \, B_2^- -B_2^+ \, B_1^- \, (t,x',0)|^2 = 
\dfrac{|B^+ \times B^- (t,x',0)|^2}{1+|\nabla' f (t,x')|^2} \ge \dfrac{\delta_0^2}{4\, (1+C \, \eps_0^2)} \, ,
\end{equation*}
where we have used \eqref{uniformc}, \eqref{uniforma} and the imbedding $H^{1.5}(\T^2) \hookrightarrow 
L^\infty(\T^2)$. We also note that thanks to \eqref{uniformb}, the $L^\infty$ norm of $B^\pm$ is uniformly 
controlled on $[0,T']$. Therefore, using the latter uniform bound for the determinant and inverting the 
linear system \eqref{systemgradf}, we have
\begin{equation}
\label{stimaf1}
\forall \, t \in [0,T'] \, ,\quad \| \nabla' f (t) \|_{H^{2.5}(\T^2)} \le C_0 \, \| B^\pm (t) \|_{3,\pm} \, ,
\end{equation}
with $C_0$ depending only on $\delta_0$ and $R$.

From the other boundary conditions on $\Gamma$:
\begin{equation*}
\dt f =v_3^\pm -v_1^\pm \, \duno f -v^\pm_2 \, \ddue f \, ,
\end{equation*}
\eqref{stimaf1} and the fact that $H^{2.5}(\T^2)$ is an algebra, we infer the second main estimate for $f$:
\begin{equation}
\label{stimaf2}
\forall \, t \in [0,T'] \, ,\quad 
\| \dt f (t) \|_{H^{2.5}(\T^2)} \le C_0 \, \left( \| v^\pm(t) \|_{3,\pm} +\| v^\pm(t),B^\pm (t) \|_{3,\pm}^2 
\right) \, .
\end{equation}
In particular, we can integrate from $0$ to $t$ and get
\begin{equation}
\label{stimaf3}
\forall \, t \in [0,T'] \, ,\quad 
\| f(t) \|_{H^{2.5}(\T^2)} \le \| f_0 \|_{H^{2.5}(\T^2)} +t \, F(\max_{0 \le s \le t} {\mathcal E}(s)) \, .
\end{equation}
We simplify \eqref{stimaf1}, \eqref{stimaf2} and \eqref{stimaf3} by using \eqref{stima-vB} (we feel free 
to use $t^2 \le t$ which always holds by assuming, without loss of generality $T' \le 1$):
\begin{align}
\forall \, t \in [0,T'] \, ,\quad 
\| \dt f(t) \|_{H^{2.5}(\T^2)}^2 &\le M_0 +t \, F(\max_{0 \le s \le t} {\mathcal E}(s)) \, ,\notag \\
\|f(t)\|_{H^{3.5}(\T^2)}^2 &\le M_0 +t \, F(\max_{0 \le s \le t} {\mathcal E}(s)) \, ,\label{stimaf4} \\
\| f(t) \|_{H^{2.5}(\T^2)} &\le \| f_0 \|_{H^{2.5}(\T^2)} +t \, F(\max_{0 \le s \le t} {\mathcal E}(s)) 
\, .\notag
\end{align}
The last estimate in \eqref{stimaf4} says that $f(t)$ remains small in $H^{2.5}$ provided that we start 
from small initial data and the first and second estimates in \eqref{stimaf4} give a control of $\dt f(t)$ 
in $H^{2.5}$ and $f$ in $H^{3.5}$. We observe that $f(t)$ is expected to remain small in $H^{2.5}$ but has 
no reason to be small in $H^{3.5}$ (in particular because no smallness condition has been made on the norm 
of $f_0$ in $H^{3.5}$).

\section{The elliptic problem for the total pressure}

We first deduce from \eqref{mhd2} the elliptic system of equations solved by the total pressure. 
Applying $A^T \nabla \cdot$ to the equation for $v^\pm$ in \eqref{mhd2} gives
\begin{equation*}
-A^T \nabla \cdot (A^T\nabla \, Q^\pm) = A^T \nabla \cdot \Big\{ \dt v^\pm 
+(\tilde v^\pm \cdot \nabla) v^\pm -(\tilde B^\pm \cdot \nabla)B^\pm \Big\} \, .
\end{equation*}
Using the divergence relations $A^T\nabla\cdot v^\pm=A^T\nabla\cdot B^\pm=0$, we then deduce the equations
\begin{equation}
\label{equQ1}
-A^T \nabla \cdot (A^T\nabla \, Q^\pm) =\F^\pm \, ,
\end{equation}
where we have set
\begin{equation}
\label{defF}
\F^\pm := -\dt A_{ki} \, \dk v_i^\pm +A_{ki} \, \dk \tilde v^\pm \cdot \nabla v_i^\pm 
-\tilde v^\pm \cdot \nabla A_{ki} \, \dk v_i^\pm -A_{ki} \, \dk \tilde B^\pm \cdot \nabla B_i^\pm 
+\tilde B^\pm \cdot \nabla A_{ki} \, \dk B_i^\pm \, .
\end{equation}
Recalling that $a=J\, A$ we get from \eqref{equQ1} the equivalent equations
\begin{equation}
\label{equQ2}
-a^T \nabla \cdot (A^T \, \nabla Q^\pm) =J\, \F^\pm \, .
\end{equation}
Now we look for the boundary conditions satisfied by $Q^\pm$. Since $\tilde v_3^\pm=\tilde B^\pm_3=0$ and 
$\psi=v_3^\pm=B^\pm_3=0$ on $[0,T]\times\Gamma_{\pm}$, from the third equation for $v^\pm$ in \eqref{mhd2} 
evaluated on $\Gamma^\pm$ we obtain the homogeneous Neumann condition
\begin{equation}
\label{bcGpm}
\dtre Q^\pm=0 \qquad {\rm on} \; [0,T] \times \Gamma_{\pm} \, .
\end{equation}
On $\Gamma$ we take the scalar product of the equation for $v^\pm$ in \eqref{mhd2} with the vector $N$. We get
\begin{equation}
\label{bcG}
-(A^T \, \nabla Q^\pm) \cdot N = \Big\{ \dt v^\pm +(\tilde v^\pm \cdot \nabla) v^\pm 
-(\tilde B^\pm \cdot \nabla) B^\pm \Big\} \cdot N \, .
\end{equation}
Let us compute the jump of each quantity in \eqref{bcG} across $\Gamma$. Since $[Q]=0$ gives $[\duno Q] 
=[\ddue Q]=0$ on $[0,T] \times \Gamma$, we obtain (recall that $J=1$ on $\Gamma$)
\begin{equation}
\label{jumpQ1}
\big[ (A^T \, \nabla Q) \cdot N \big] = [A_{\ell j} \, N_j \, \partial_\ell Q] 
=(1+|\nabla'f|^2) \, [\dtre Q] \, .
\end{equation}
Using the boundary conditions $\dt f=v^\pm\cdot N, \; B^\pm\cdot N=0,$ on $[0,T]\times\Gamma$, we also deduce
\begin{equation}
\label{jumpQ2}
\left[ \{\dt v +(\tilde v \cdot \nabla) v -(\tilde B \cdot \nabla) B \} \cdot N \right] =\left[ 
2\, v' \cdot \nabla' \dt f +(v' \cdot \nabla') \nabla' f \cdot v' -(B' \cdot \nabla') \nabla' f 
\cdot B' \right] \, .
\end{equation}
Thus from \eqref{bcG}, \eqref{jumpQ1} and \eqref{jumpQ2}, we find the boundary condition
\begin{equation}
\label{jumpQ3}
[A_{\ell j} \, N_j \, \partial_\ell Q] =\G \qquad {\rm on} \; [0,T] \times \Gamma \, ,
\end{equation}
where we have set
\begin{equation}
\label{defG}
\G := -\left[ 2\, v' \cdot \nabla' \dt f +(v' \cdot \nabla') \nabla' f \cdot v' 
-(B' \cdot \nabla') \nabla' f \cdot B' \right] \, .
\end{equation}
Collecting the equations \eqref{equQ1}, \eqref{bcGpm}, \eqref{jumpQ3} gives the elliptic problem
\begin{equation}
\label{problemQ}
\begin{cases}
-A^T \nabla \cdot (A^T \, \nabla Q^\pm) =\F^\pm \, ,& {\rm on} \; [0,T] \times \Omega^\pm \, ,\\
[Q] =0 \, , & {\rm on} \; [0,T] \times \Gamma \, ,\\
[A_{\ell j} \, N_j \, \partial_\ell Q] =\G \, , & {\rm on} \; [0,T] \times \Gamma \, ,\\
\dtre Q^\pm =0 & {\rm on} \; [0,T] \times \Gamma_{\pm} \, ,\\
(x_1,x_2) \mapsto Q^\pm(t,x_1,x_2,x_3) & {\rm is\; 1-periodic,}
\end{cases}
\end{equation}
with $\F^\pm$ and $\G$ defined in \eqref{defF}, \eqref{defG}, respectively.

\begin{remark}
\label{remark2}
When one tries to solve the elliptic system for the pressure, it may be easier to work with the formulation 
\eqref{equQ2} instead of \eqref{equQ1} because of the necessary compatibility condition on the data $\F^\pm,\G$. 
More precisely, trying to solve problem \eqref{mhd2} by a fixed point argument, one possible step could be 
the resolution of system \eqref{problemQ}. (We have in mind the approach used in \cite{secchi93}, for the 
resolution of the incompressible MHD equations in a fixed domain under slip boundary conditions.) Thus the 
compatibilty condition needs to be satisfied by the data.

In order to formulate the compatibility condition we compute by an integration by parts
\begin{align*}
-\sum_\pm \int_{\Omega^\pm} a^T \nabla \cdot (A^T \, \nabla Q^\pm) \, {\rm d}x 
&= -\int_{\Gamma_+} a_{3i} \, A_{ki} \, \dk Q^+ \, {\rm d}x' 
+\int_{\Gamma} a_{3i} \, A_{ki} \, [\dk Q] \, {\rm d}x' 
+\int_{\Gamma_-} a_{3i} \, A_{ki} \, \dk Q^- \, {\rm d}x' \\
&+\sum_\pm \int_{\Omega^\pm} \dk a_{ki} \, A_{hi} \, \dh Q^\pm \, {\rm d}x \, ,
\end{align*}
where the last integral vanishes because of the so-called Piola's identity $\dk a_{ki} =0$. The boundary 
conditions for $Q$ yield
\begin{equation*}
-\sum_\pm \int_{\Omega^\pm} a^T \nabla \cdot (A^T \, \nabla Q^\pm) \, {\rm d}x 
=\int_\Gamma a_{3i} \, A_{ki} \, [\dk Q] \, {\rm d}x' =\int_\Gamma A_{ki} \, N_i \, [\dk Q] \, {\rm d}x' \, .
\end{equation*}
This shows that the data $\F,\G$ of problem \eqref{problemQ} need to satisfy the condition
\begin{equation*}
\sum_\pm \int_{\Omega^\pm} J \, \F^\pm \, {\rm d}x =\int_\Gamma \G \, {\rm d}x' \, .
\end{equation*}
This condition is satisfied with our definitions since
\begin{multline*}
\sum_\pm \int_{\Omega^\pm} J\, \F^\pm \, {\rm d}x =
\sum_\pm \int_{\Omega^\pm} a^T \nabla \cdot \{ \dt v^\pm 
+(\tilde v^\pm \cdot \nabla) v^\pm -(\tilde B^\pm \cdot \nabla) B^\pm \} \, {\rm d}x \\
=-\int_\Gamma \left[ N \cdot \{ \dt v +(\tilde v \cdot \nabla) v 
-(\tilde B \cdot \nabla) B \} \right] \, {\rm d}x' =\int_\Gamma \G \, {\rm d}x' \, ,
\end{multline*}
from \eqref{jumpQ2}, \eqref{defG}, and by computations as above. Thus the compatibility condition is satisfied.

Our approach here is different because we have already assumed that the solution exists and we only wish to 
prove an a priori estimate on a time interval that is independent of the solution. Consequently, we shall 
deal with the slightly more symmetric formulation \eqref{equQ1} to derive energy estimates.
\end{remark}

In the rest of this section we study the elliptic problem \eqref{problemQ} for generic data $\F^\pm,\G$. Only 
at the end of the section we will go back to the specific definition of $\F^\pm,\G$ given in \eqref{defF}, 
\eqref{defG}. As \eqref{problemQ} is time-independent, in the sense that time appears only as a parameter, 
for simplicity of notation from now on in this section the explicit dependence on $t$ will be neglected.

\subsection{The functional framework}

Thanks to the continuity of the total pressure across $\Gamma$, we can define the pressure $Q \in H^1(\Omega)$ 
by $Q :=Q^\pm$ on $\Omega^\pm$. The function $Q$ belongs to the Hilbert space
$$
{\mathcal V} :=\left\{ R \in H^1(\Omega) \, , \, \int_\Omega R\, {\rm d}x =0  \right\} \, .
$$
The space ${\mathcal V}$ equipped with the norm $\| \nabla R \|_{L^2(\Omega)}$ is indeed a Hilbert space, because of the 
Poincar\'e inequality, and the norm $\| \nabla R \|_{L^2(\Omega)}$ is equivalent to the standard $H^1$ norm. 
In what follows, the function $Q$ will be estimated in the space ${\mathcal V}$, and we shall repeatedly use 
the fact that the $L^2$ norm of $\nabla Q$ is equivalent to $\| Q^\pm \|_{1,\pm}$.

\subsection{The general procedure for the pressure estimate}

\underline{Step 1} We start from \eqref{problemQ}, multiply each equation in $\Omega^\pm$ by $Q^\pm$, integrate 
over $\Omega^\pm$ and use integration by parts. This yields
\begin{align*}
\sum_\pm \int_{\Omega^\pm} \partial_k (A_{kj} \, Q^\pm) \, A_{\ell j} \, \partial_\ell Q^\pm \, {\rm d}x 
&=\int_{\Gamma_+} A_{3j} \, Q^+ \, A_{\ell j} \, \partial_\ell Q^+ \, {\rm d}x' 
-\int_{\Gamma_-} A_{3j} \, Q^- \, A_{\ell j} \, \partial_\ell Q^- \, {\rm d}x' \\
&-\int_\Gamma A_{3j} \, Q^+ \, A_{\ell j} \, \partial_\ell Q^+ \, {\rm d}x' 
+\int_\Gamma A_{3j} \, Q^- \, A_{\ell j} \, \partial_\ell Q^- \, {\rm d}x' \\
&+\sum_\pm \int_{\Omega^\pm} Q^\pm \, \F^\pm \, {\rm d}x \, .
\end{align*}
We recall that from the boundary conditions, $\psi$ and $\dtre Q^\pm$ vanish on $\Gamma_\pm$ so the 
integrals on $\Gamma_\pm$ vanish. So 
we get
\begin{align*}
\sum_\pm \int_{\Omega^\pm} A_{kj} \, \partial_k Q^\pm \, A_{\ell j} \, \partial_\ell Q^\pm \, {\rm d}x = 
&-\sum_\pm \int_{\Omega^\pm} (\partial_k A_{kj}) \, Q^\pm \, A_{\ell j} \, \partial_\ell Q^\pm \, {\rm d}x \\
&-\int_\Gamma Q|_\Gamma \, \G \, {\rm d}x' +\sum_\pm \int_{\Omega^\pm} Q^\pm \, \F^\pm \, {\rm d}x \, ,
\end{align*}
where $Q|_\Gamma$ denotes the common trace of $Q^\pm$ on $\Gamma$. The integral on the left hand side 
gives the coercive term in $\nabla Q^\pm$ (see the definition \eqref{defAJa} and recall the condition 
$\| \nabla \psi \|_{L^\infty ([0,T'] \times \Omega)} \le 1/2$). Then we apply the Cauchy-Schwarz and 
Poincar\'e inequalities to derive
\begin{equation*}
c \, \| Q^\pm \|_{1,\pm}^2 \le \| \F^\pm \|_\pm^2 +\| \G \|_{H^{-0.5}(\T^2)}^2 
+\sum_\pm \int_{\Omega^\pm} |\partial_k A_{kj}| \, |Q^\pm| \, |\partial_\ell Q^\pm| \, {\rm d}x \, ,
\end{equation*}
for a suitable numerical constant $c>0$. Then we use the H\"older and Sobolev inequalities to derive
\begin{multline*}
\sum_\pm \int_{\Omega^\pm} |\partial_k A_{kj}| \, |Q^\pm| \, |\partial_\ell Q^\pm| \, {\rm d}x 
\le C \, \| \nabla Q^\pm \|_\pm \, |\nabla A|_4 \, |Q^\pm|_{4,\pm} \\
\le C \, \| A \|_2 \, \| Q^\pm \|_{1,\pm}^2 
\le C \, \| f(t) \|_{H^{2.5}(\T^2)} \, \| Q^\pm \|_{1,\pm}^2 \, .
\end{multline*}
Up to choosing $\eps_0$ small enough, we have thus derived the first estimate
\begin{equation}
\label{stimaQH1}
\forall \, t \in [0,T'] \, ,\quad 
\| Q^\pm \|_{1,\pm}^2 \le C_0 \, \left( \| \F^\pm \|_\pm^2 +\| \G \|_{H^{-0.5}(\T^2)}^2 \right) \, .
\end{equation}

\underline{Step 2} We are now going to estimate $Q^\pm$ in $H^2(\Omega^\pm)$. Let us first apply a tangential 
derivative $\dbar$ to \eqref{problemQ}, with $\dbar =\duno$ or $\dbar =\ddue$. Defining $\overline{Q}^\pm 
:=\dbar Q^\pm$, we obtain the elliptic system
\begin{equation}
\label{problemdunoQ}
\begin{cases}
-A^T \nabla \cdot (A^T \, \nabla \overline{Q}^\pm) = \overline{\F}^\pm \, ,& 
{\rm on} \; [0,T] \times \Omega^\pm \, ,\\
[\overline{Q}] =0 \, , & {\rm on} \; [0,T] \times \Gamma \, ,\\
[A_{\ell j} \, N_j \, \partial_\ell \overline{Q}] =\overline{\G} \, , & 
{\rm on} \; [0,T] \times \Gamma \, ,\\
\dtre \overline{Q}^\pm =0 & {\rm on} \; [0,T] \times \Gamma_{\pm} \, ,\\
(x_1,x_2) \mapsto \overline{Q}^\pm(t,x_1,x_2,x_3) & {\rm is\; 1-periodic,}
\end{cases}
\end{equation}
where the new source terms $\overline{\F}^\pm,\overline{\G}$ are defined by
\begin{align}
\overline{\F}^\pm &:= \dbar \F^\pm +\dbar A_{kj} \, \partial_k (A_{\ell j} \, \partial_\ell Q^\pm) 
+A_{kj} \, \partial_k ((\dbar A_{\ell j}) \, \partial_\ell Q^\pm) \, ,\label{defF1}\\
\overline{\G} &:=\dbar \G -\dbar (A_{\ell j} \, N_j) \, [\partial_\ell Q] 
=\dbar \G -\dbar (|\nabla'f|^2) \, [\dtre Q] \, .\label{defG1}
\end{align}
We apply the same procedure of integration by parts as above, obtaining first
\begin{align*}
\sum_\pm \int_{\Omega^\pm} A_{kj} \, \partial_k \overline{Q}^\pm \, A_{\ell j} \, 
\partial_\ell \overline{Q}^\pm \, {\rm d}x = 
&-\sum_\pm \int_{\Omega^\pm} (\partial_k A_{kj}) \, \overline{Q}^\pm \, A_{\ell j} \, 
\partial_\ell \overline{Q}^\pm \, {\rm d}x \\
&-\int_\Gamma \overline{Q}|_\Gamma \, \overline{\G} \, {\rm d}x' 
+\sum_\pm \int_{\Omega^\pm} \overline{Q}^\pm \, \overline{\F}^\pm \, {\rm d}x \, ,
\end{align*}
where $\overline{Q}|_\Gamma$ denotes the common trace of $\overline{Q}^\pm$ on $\Gamma$. The integrals 
on the left hand side give the coercive terms and, as above, we can absorb the first integrals occuring 
in the right hand side by choosing $\eps_0$ small enough. We thus have
\begin{equation*}
c \, \| \overline{Q}^\pm \|_{1,\pm}^2 \le 
-\int_\Gamma \overline{Q}|_\Gamma \, \overline{\G} \, {\rm d}x' 
+\sum_\pm \int_{\Omega^\pm} \overline{Q}^\pm \, \overline{\F}^\pm \, {\rm d}x \, .
\end{equation*}
We now estimate the integrals on $\Omega^\pm$, recalling the definition \eqref{defF1} for $\overline{\F}^\pm$. 
Let us first observe that the term with $\dbar \F^\pm$ can be integrated by parts and we can then apply 
Cauchy-Schwarz and Young inequalities. The other terms are estimated as follows:
\begin{multline*}
\sum_\pm \int_{\Omega^\pm} |\overline{Q}^\pm| \, |\dbar A_{kj}| \, |A_{\ell j}| \, |\partial_k \partial_\ell Q^\pm| 
\, {\rm d}x \le C \, \| Q^\pm \|_{2,\pm} \, |\nabla A|_4 \, | A|_\infty \, |\overline{Q}^\pm|_{4,\pm} \\
\le C \, \| A \|_2^2 \, \| Q^\pm \|_{2,\pm}^2 
\le C \, \| f(t) \|_{H^{2.5}(\T^2)}^2 \, \| Q^\pm \|_{2,\pm}^2 \, ,
\end{multline*}
\begin{multline*}
\sum_\pm \int_{\Omega^\pm} |\overline{Q}^\pm| \, |\dbar A_{kj}| \, |\partial_k A_{\ell j}| \, |\partial_\ell Q^\pm| 
\, {\rm d}x \le C \, |\overline{Q}^\pm|_{4,\pm} \, |\nabla A|_4^2 \, |\nabla Q^\pm|_{4,\pm} \\
\le C \, \| A \|_2^2 \, \| Q^\pm \|_{2,\pm}^2 
\le C \,  \| f(t) \|^2_{H^{2.5}(\T^2)} \, \| Q^\pm \|_{2,\pm}^2 \, ,
\end{multline*}
and applying similar sequences of inequalities, the reader can get quickly convinced that all other terms 
in the product $\overline{Q}^\pm \, \overline{\F}^\pm$ are estimated by the same quantity. We thus have
\begin{equation*}
c \, \| \overline{Q}^\pm \|_{1,\pm}^2 \le \| \F^\pm \|_\pm^2 
+\left| \int_\Gamma \overline{Q}|_\Gamma \, \overline{\G} \, {\rm d}x' \right| + C \,  \| f(t) \|^2_{H^{2.5}(\T^2)} \, \| Q^\pm \|_{2,\pm}^2\, .
\end{equation*}
Let us now turn to the boundary term. Of course, we have
\begin{equation*}
\left| \int_\Gamma \overline{Q}|_\Gamma \, \dbar \G \, {\rm d}x' \right| \le \| \G \|_{H^{0.5}(\T^2)} \, 
\| \overline{Q}|_\Gamma \|_{H^{0.5}(\Gamma)} \le C \, \| \G \|_{H^{0.5}(\T^2)} \, \| \overline{Q}^\pm \|_{1,\pm} \, .
\end{equation*}
The remaining term occuring in $\overline{\G}$ is easily estimated as follows:
\begin{multline*}
\left| \int_\Gamma \overline{Q}|_\Gamma \, [\dtre Q] \, \dbar (|\nabla'f|^2) \, {\rm d}x' \right| 
\le \left| \overline{Q}|_\Gamma \right|_3 \, |[\dtre Q]|_3 \, \left| \dbar (|\nabla'f|^2) \right|_3 \\
\le C \, \| \overline{Q}|_\Gamma \|_{H^{0.5}(\Gamma)} \, \| [\dtre Q] \|_{H^{0.5}(\Gamma)} \, 
\| |\nabla'f|^2 \|_{H^{1.5}(\T^2)} 
\le C \,  \| Q^\pm \|^2_{2,\pm} \, \| f(t) \|_{H^{2.5}(\T^2)}^2 \, ,
\end{multline*}
where we have used $H^{0.5}(\Gamma) \hookrightarrow L^4(\Gamma)$ (which holds in two space dimensions), and the 
fact that $H^{1.5}(\Gamma)$ is an algebra. Applying Young's inequality again, we thus obtain
\begin{equation}
\label{stimadbarQ}
\| \overline{Q}^\pm \|_{1,\pm}^2 \le C_0 \, \left( \| \F^\pm \|_\pm^2 +\| \G \|_{H^{0.5}(\T^2)}^2 
+\| f(t) \|^2_{H^{2.5}(\T^2)} \, \| Q^\pm \|_{2,\pm}^2 \right) \, .
\end{equation}

\underline{Step 3} The remaining second order derivative $\dtre^2 Q^\pm$ is estimated directly from the 
equation \eqref{problemQ} by using the explicit expression of the coefficients $A_{kj}$. More precisely, 
\eqref{problemQ} reads
\begin{equation*}
A_{ji} \, A_{ki} \, \partial_j \partial_k Q^\pm =-\F^\pm -A_{ji} \, \partial_j A_{ki} \, \partial_k Q^\pm \, ,
\end{equation*}
that is,
\begin{equation}
\label{equationdtreQ}
\dfrac{1+|\nabla' \psi|^2}{(1+\dtre \psi)^2} \, \dtre^2 Q^\pm +\duno^2 Q^\pm +\ddue^2 Q^\pm 
-2\dfrac{\duno \psi \, \duno \dtre Q^\pm}{1+\dtre \psi} -2\dfrac{\ddue \psi \, \ddue \dtre Q^\pm}{1+\dtre \psi} 
=-\F^\pm -A_{ji} \, \partial_j A_{ki} \, \partial_k Q^\pm \, .
\end{equation}
We thus obtain
\begin{multline*}
c \, \| \dtre^2 Q^\pm \|_\pm^2 \le C \, \left( \| \overline{Q}^\pm \|_{1,\pm}^2 +\| \F^\pm \|_\pm^2 
+\| A \, \partial^1 A \, \partial^1 Q^\pm \|_\pm^2 \right) \\
\le C \, \left( \| \overline{Q}^\pm \|_{1,\pm}^2 +\| \F^\pm \|_\pm^2 
+\| f(t) \|^2_{H^{2.5}(\T^2)} \, \| Q^\pm \|_{2,\pm}^2 \right) \, .
\end{multline*}
Combining with \eqref{stimaQH1} and \eqref{stimadbarQ} and choosing the numerical constant $\eps_0$ 
sufficiently small, we obtain
\begin{equation}
\label{stimaQH2}
\forall \, t \in [0,T'] \, ,\quad 
\| Q^\pm \|_{2,\pm}^2 \le C_0 \, \left( \| \F^\pm \|_\pm^2 +\| \G \|_{H^{0.5}(\T^2)}^2 \right) \, .
\end{equation}

\underline{Step 4} We now apply the estimate \eqref{stimaQH2} to the solution $\overline{Q}^\pm$ to the problem 
\eqref{problemdunoQ}, which has the same form as \eqref{problemQ} but with different source terms (defined in 
\eqref{defF1} and \eqref{defG1}). We thus have
\begin{equation*}
\forall \, t \in [0,T'] \, ,\quad 
\| \overline{Q}^\pm \|_{2,\pm}^2 \le C \, \left( \| \overline{\F}^\pm \|_\pm^2 
+\| \overline{\G} \|_{H^{0.5}(\T^2)}^2 \right) \, .
\end{equation*}
The $L^2$-estimate of $\overline{\F}^\pm$ follows by applying similar arguments as above; for instance, we have
\begin{equation*}
\|\partial^1 A \, \partial^1 A \, \partial^1 Q^+\|_+ \le \|\partial^1 A \, \partial^1 A\|_+ \, 
\| Q^+ \|_{W^{1,\infty}(\Omega^+)} \le C \, |\nabla A|_4^2 \, \| Q^+ \|_{3,+} \le 
C \, \| f(t) \|_{H^{2.5}(\T^2)}^2 \, \| Q^\pm \|_{3,\pm} \, .
\end{equation*}
All the other terms in $\overline{\F}^\pm$ admit the same upper bound, that is
\begin{equation*}
\| \overline{\F}^\pm \|_\pm^2 \le C \, \left( \| \F^\pm \|_{1,\pm}^2 
+C \, \| f(t) \|_{H^{2.5}(\T^2)}^2 \, \| Q^\pm \|_{3,\pm}^2 \right) \, .
\end{equation*}
As far as the boundary source term is concerned, we apply Lemma \ref{lemma4} and obtain
\begin{equation*}
\| \dbar (|\nabla'f|^2) \, [\dtre Q] \|_{H^{0.5}(\Gamma)} \le C \, \| \dbar (|\nabla'f|^2) \|_{H^{0.5}(\T^2)} 
\, \| [\dtre Q] \|_{H^{1.5}(\Gamma)} \le C \, \| f(t) \|_{H^{2.5}(\T^2)}^2 \, \| Q^\pm \|_{3,\pm}^2 \, .
\end{equation*}
We have thus derived the upper bound
\begin{equation*}
\forall \, t \in [0,T'] \, ,\quad 
\| \overline{Q}^\pm \|_{2,\pm}^2 \le C \, \left( \| {\F}^\pm \|_{1,\pm}^2 
+\| {\G} \|_{H^{1.5}(\T^2)}^2 +\| f(t) \|^2_{H^{2.5}(\T^2)} \, \| Q^\pm \|_{3,\pm}^2 \right) \, .
\end{equation*}
The remaining third order derivative $\dtre^3 Q^\pm$ can be estimated by applying $\dtre$ to the equation 
\eqref{equationdtreQ}. The commutators are estimated exactly as above, and we now feel free to skip a few 
details. Eventually, up to choosing a sufficiently small numerical constant $\eps_0>0$, and provided that 
$T'$ is such that \eqref{uniforma} holds, we derive the estimate
\begin{equation}
\label{stimaQH3}
\forall \, t \in [0,T'] \, ,\quad 
\| Q^\pm \|_{3,\pm}^2 \le C_0 \, \left( \| \F^\pm \|_{1,\pm}^2 +\| \G \|_{H^{1.5}(\T^2)}^2 \right) \, .
\end{equation}

\subsection{The final pressure estimate}

It only remains to use the definition of the source terms $\F^\pm,\G$ in \eqref{stimaQH3}. Using first the 
fact that $H^{1.5}(\T^2)$ is an algebra and recalling the definition \eqref{defG} of $\G$, we have
\begin{equation*}
\| \G(t) \|_{H^{1.5}(\T^2)} \le C \, \left( 
 \| v^\pm(t) \|_{3,\pm} \| \dt f(t) \|_{H^{2.5}(\T^2)} + \| v^\pm(t),B^\pm(t) \|^2_{3,\pm} \| f(t) \|_{H^{3.5}(\T^2)} \right) \, ,
\end{equation*}
and using \eqref{stima-vB}, \eqref{stimaf4}, we get
\begin{equation*}
\| \G(t) \|^2_{H^{1.5}(\T^2)} \le M_0 +t \, F(\max_{0 \le s \le t} {\mathcal E}(s)) \, .
\end{equation*}

The source terms $\F^\pm$ can be estimated by applying the classical estimate
\begin{equation*}
\| u_1 \, u_2 \|_{H^1} \le C \, 
(\| u_1 \|_{L^\infty} \, \| u_2 \|_{H^1} +\| u_2 \|_{L^\infty} \, \| u_1 \|_{H^1}) \, .
\end{equation*}
Analyzing each separate term in the definition \eqref{defF} of $\F^\pm$ by applying the latter product 
estimate and by using \eqref{uniform}, \eqref{stima-vB} or \eqref{stimaf4}, we get
\begin{equation*}
\| \F^\pm(t) \|_{1,\pm}^2 \le M_0 +t\, F(\max_{0 \le s \le t} {\mathcal E}(s)) \, .
\end{equation*}
Adding the previous two inequalities, we obtain our final estimate for the pressure:
\begin{equation}
\label{stimaQ}
\forall \, t \in [0,T'] \, ,\quad 
\| Q^\pm \|_{3,\pm}^2 \le M_0 +t\, F(\max_{0 \le s \le t} {\mathcal E}(s)) \, .
\end{equation}

\section{Proof of Theorem \ref{mainthm}}

If we summarize the analysis of the previous sections, we have shown that there exist some numerical 
constants $\eps_0>0$ and $M_0>0$, there exists a nonnegative nondecreasing function $F$ on $\R^+$, all 
three depending only on $\delta_0$ and $R$ such that, on any time interval $[0,T']$ for which the 
inequalities \eqref{uniform} are valid, there holds
\begin{equation}
\label{relationE}
\forall \, t \in [0,T'] \, ,\quad {\mathcal E}(t) \le M_0 +t\, F(\max_{0 \le s \le t} {\mathcal E}(s)) \, .
\end{equation}
The function $F$ and the constants $\eps_0,M_0$ are independent of the particular solution that we are 
considering. Moreover, $H^2(\Omega^\pm)$ is an algebra so applying direct estimates on \eqref{mhd2} we 
find
\begin{equation*}
\forall \, t \in [0,T'] \, ,\quad \| \dt v^\pm(t),\dt B^\pm(t) \|_{2,\pm} \le F({\mathcal E}(t)) \, ,
\end{equation*}
so integrating with respect to $t$ we have
\begin{equation}
\label{conditionE}
\forall \, t \in [0,T'] \, ,\quad \|  v^\pm(t)-v^\pm_0, B^\pm(t)-B^\pm_0 \|_{2,\pm} 
\le t\, F(\max_{0 \le s \le t} {\mathcal E}(s)) \, .
\end{equation}

From now on, the nonnegative nondecreasing function $F$ is fixed, as well as the constants $\eps_0$, $M_0$. 
To complete the proof of Theorem \ref{mainthm}, we define $\eps_1 := \eps_0/2$, and we choose a time $T_0>0$ 
such that $2\, T_0 \, F(2\, M_0) \le M_0$ and $2\, T_0 \, F(2\, M_0) \le \eps_1$. We emphasize that the definition of $T_0$ only depends on $\delta_0$ and 
$R$.
Then we define $T'$ as the 
maximal time on which \eqref{uniform} holds ($T'$ is positive because \eqref{uniform} holds at the initial 
time with a strict inequality). We will see that $T_0\le T'$ if $T_0<T$, and $T'= T< T_0$ if $T<T_0.$

There are now two possibilities. Let us first assume $T>T_0$, and let us define $I$ as the set of all times 
$t \in [0,T_0]$ such that
\begin{equation*}
\max_{0 \le s \le t} {\mathcal E}(s) \le 2\, M_0 \, ,\quad 
\max_{0 \le s \le t} \|  v^\pm(s)-v^\pm_0, B^\pm(s)-B^\pm_0 \|_{2,\pm} \le \eps_0 \, ,\quad 
\max_{0 \le s \le t} \| f(s) \|_{H^{2.5}(\T^2)} \le \eps_0 \, .
\end{equation*}
Then $I$ is non-empty since it contains $0$ (use \eqref{relationE} for $t=0$), and $I$ is closed since all 
functions involved in the definition of $I$ are continuous. Let us show that $I$ is open. Let $\underline{t} 
\in I$. Using \eqref{relationE}, we have
\begin{equation*}
{\mathcal E}(\underline{t}) \le M_0 +\underline{t} \, F(\max_{0 \le s \le \underline{t}} {\mathcal E}(s)) 
\le M_0 +T_0 \, F(2\, M_0) <2\, M_0 \, .
\end{equation*}
In the same way, \eqref{stimaf4}, \eqref{conditionE} and the definition of $\eps_1$ give
\begin{equation*}
\|  v^\pm(\underline{t})-v^\pm_0, B^\pm(\underline{t})-B^\pm_0 \|_{2,\pm} <\eps_0 \, ,\quad 
\| f(\underline{t}) \|_{H^{2.5}(\T^2)} <\eps_0 \, .
\end{equation*}
Consequently, there exists a neighborhood of $\underline{t}$ in $[0,T_0]$ that is included in $I$. In 
other words, $I$ is open. Hence $I=[0,T_0]$ and the result of Theorem \ref{mainthm} is proved. The proof 
in the case $T \le T_0$ is similar.

\section{Proof of Lemma \ref{lemma1}}
\label{prooflemma1}

Given $\chi\in C^\infty_0(\R)$, $\chi=1$ on $[-1,1]$, we define
\begin{equation}
\label{def-f1}
f^{(1)}(x',x_3) :=\chi (x_3|D|) \, f(x') \, ,\quad \psi (x',x_3) :=(1-x_3^2) \, f^{(1)}(x',x_3) \, ,
\end{equation}
where $\chi(x_3|D|)$ is the pseudo-differential operator with $|D|$ being the Fourier multiplier in the 
variables $x'$. From the definition it readily follows that $\psi(x',0)=f(x')$, $\psi(x',\pm 1)=0$ for 
all $x'\in\T^2$. Moreover, 
\begin{equation}
\label{d3psi}
\dtre \psi(x',x_3) =-2\, x_3 \, f^{(1)}(x',x_3) +(1-x_3^2) \, \chi'(x_3|D|) \, |D| \, f(x') \, ,
\end{equation}
which vanishes if $x_3=0$. Given any function $g$ defined on $\T^2$, let us denote by $c_k(g)$ the $k$-th 
Fourier coefficient
$$
c_k(g) = \int_{\T^2} {\rm e}^{-2\, i \, \pi \, k\cdot x'} \, g(x') \, {\rm d}x' \, ,\quad k \in \Z^2 \, .
$$
Since $c_k(f^{(1)}(\cdot,x_3))=\chi(x_3\, |k|) \, c_k(f)$, we compute
\begin{multline*}
\|\psi(\cdot,x_3)\|^2_{H^m(\T^2)} =(1-x_3^2)^2 \, \| f^{(1)}(\cdot,x_3) \|^2_{H^m(\T^2)} 
\le C \, (1-x_3^2)^2 \, \sum_{k \in \Z^2} (1+|k|^2)^m \, \left| c_k(f^{(1)}(\cdot,x_3)) \right|^2 \\
\le C \, (1-x_3^2)^2 \, \sum_{k \in \Z^2} (1+|k|^2)^m \, \chi^2(x_3 \, |k|) \, |c_k(f)|^2 \, .
\end{multline*}
It follows that
\begin{align*}
\| \psi \|^2_{L^2_{x_3}(H^m(\T^2))} &\le C\, \int^1_{-1} (1-x_3^2)^2 \, 
\sum_{k \in \Z^2} (1+|k|^2)^m \, \chi^2(x_3\, |k|) \, |c_k(f)|^2 \, {\rm d}x_3 \\
&\le C\, \sum_{k \in \Z^2} (1+|k|^2)^m \, |c_k(f)|^2 \, \int^1_{-1} \chi^2 (x_3\, |k|) \, {\rm d}x_3 \\
&\le C\, |c_0(f)|^2 +C\, \sum_{|k| \ge 1} (1+|k|^2)^m \, |c_k(f)|^2 \, \dfrac{1}{|k|} \, 
\int^{|k|}_{-|k|} \chi^2(s) \, {\rm d}s \, .
\end{align*}
Denoting by $X\in C^\infty(\R)$ the primitive function of $\chi^2$ vanishing at $-\infty$, i.e. $X'(s)=\chi^2(s)$, 
we notice that $X$ is bounded over all $\R$. Then
\begin{equation}
\label{L2psi}
\| \psi \|^2_{L^2_{x_3}(H^m(\T^2))} \le C\, |c_0(f)|^2 +C\, \sum_{|k| \ge 1} (1+|k|^2)^{m-1/2} 
|c_k(f)|^2 \, \sup_{s \in \R} |X(s)| \, \le C\, \| f \|_{H^{m-1/2}(\T^2)}^2 \, .
\end{equation}
In a similar way, from \eqref{d3psi}, we obtain
\begin{align*}
\| \dtre \psi \|^2_{L^2_{x_3}(H^{m-1}(\T^2))} &\le C \, \Big( 
\| \chi(x_3\, |D|) \, f \|^2_{L^2_{x_3}(H^{m-1}(\T^2))} 
+\| \chi'(x_3\, |D|) \, |D| \, f \|^2_{L^2_{x_3}(H^{m-1}(\T^2))} \Big) \\
&\le C\, \sum_{k \in \Z^2} (1+|k|^2)^{m-1} \, |c_k(f)|^2 \, \int^1_{-1} \chi^2 (x_3\, |k|) \, {\rm d}x_3 \\
&+C\, \sum_{k \in \Z^2} (1+|k|^2)^{m-1} \, |k|^2 \, |c_k(f)|^2 \, 
\int^1_{-1} |\chi'(x_3 \, |k|)|^2 \, {\rm d}x_3 \\
&\le C\, \| f \|_{H^{m-3/2}(\T^2)}^2 +C\, \sum_{k \neq 0} (1+|k|^2)^{m-1} |k| \, |c_k(f)|^2 \, 
\int^{|k|}_{-|k|} |\chi'(s)|^2 \, {\rm d}s \, .
\end{align*}
Denoting by $Y\in C^\infty(\R)$ a primitive function of $(\chi')^2$, we also notice that $Y$ is bounded 
over all $\R$, so as in \eqref{L2psi}, we get
\begin{equation*}
\| \dtre \psi \|^2_{L^2_{x_3}(H^{m-1}(\T^2))} \le C\, \| f \|_{H^{m-3/2}(\T^2)}^2 
+C\, \sum_{|k| \ge 1} (1+|k|^2)^{m-1/2} \, |c_k(f)|^2 \, \sup_{s\in\R} |Y(s)| 
\le C\, \|f\|_{H^{m-1/2}(\T^2)}^2 \, .
\end{equation*}
Iterating the same argument yields
\begin{equation*}
\| \dtre^j \psi \|^2_{L^2_{x_3}(H^{m-j}(\T^2))} \le C\, \| f \|_{H^{m-1/2}(\T^2)}^2 \, ,\quad j=0,\dots,m \, .
\end{equation*}
Adding over $j=0,\dots, m$ finally gives $\psi \in H^m(\Omega)$ and the continuity of the map $f \mapsto \psi$.

The proof of Lemma \ref{lemma2} follows from Lemma \ref{lemma1}, with $t$ as a parameter. Notice also that 
the map $f\to f^{(1)}$, see \eqref{def-f1}, is linear and that the time regularity is conserved because, with 
obvious notation, $(\dt^j f)^{(1)} =\dt^j (f^{(1)})$. The conclusions of Lemma \ref{lemma2} follow directly.

\bibliographystyle{plain}
\bibliography{CMST}

\begin{thebibliography}{10}

\bibitem{alihunter}
G.~Al{\`{\i}}, J. K.~Hunter.
\newblock Nonlinear surface waves on a tangential discontinuity in
  magnetohydrodynamics.
\newblock {\em Quart. Appl. Math.}, 61(3):451--474, 2003.

\bibitem{axford}
W.I. Axford.
\newblock Note on a problem of magnetohydrodynamic stability.
\newblock {\em Canad. J. Phys}, 40:654--655, 1962.

\bibitem{benzoni-serre}
S.~Benzoni-Gavage, D.~Serre.
\newblock {\em Multidimensional hyperbolic partial differential equations}.
\newblock Oxford University Press, 2007.
\newblock First-order systems and applications.

\bibitem{chandrasekhar}
S.~Chandrasekhar.
\newblock {\em Hydrodynamic and hydromagnetic stability}.
\newblock Dover publications, 1981.

\bibitem{ChenWang}
G.-Q. Chen, Y.-G.~Wang.
\newblock Existence and stability of compressible current-vortex sheets in
  three-dimensional magnetohydrodynamics.
\newblock {\em Arch. Ration. Mech. Anal.}, 187(3):369--408, 2008.

\bibitem{coutandshkoller1}
D.~Coutand, S.~Shkoller.
\newblock Well-posedness of the free-surface incompressible {E}uler equations
  with or without surface tension.
\newblock {\em J. Amer. Math. Soc.}, 20(3):829--930, 2007.

\bibitem{coutandshkoller3}
D.~Coutand, S.~Shkoller.
\newblock Well-posedness in smooth function spaces for the moving-boundary
  3-{D} compressible {E}uler equations in physical vacuum.
\newblock {\em Preprint}, arXiv:1003.4721, 2010.

\bibitem{landau}
L.~D. Landau, E. M.~Lifshitz.
\newblock {\em Course of theoretical physics. {V}ol. 8}.
\newblock Pergamon Press, Oxford, 1984.
\newblock Electrodynamics of continuous media. Second Russian edition revised
  by Lifshits and L. P. Pitaevski{\u\i}.

\bibitem{lannes}
D.~Lannes.
\newblock Well-posedness of the water-waves equations.
\newblock {\em J. Amer. Math. Soc.}, 18(3):605--654 (electronic), 2005.

\bibitem{morandotrakhinintrebeschi}
A.~Morando, Y. Trakhinin \& P.~Trebeschi.
\newblock Stability of incompressible current-vortex sheets.
\newblock {\em J. Math. Anal. Appl.}, 347(2):502--520, 2008.

\bibitem{secchi93}
P.~Secchi.
\newblock On the equations of ideal incompressible magnetohydrodynamics.
\newblock {\em Rend. Sem. Mat. Univ. Padova}, 90:103--119, 1993.

\bibitem{syrovatskii}
S.I. Syrovatskij.
\newblock The stability of tangential discontinuities in a magnetohydrodynamic
  medium.
\newblock {\em Zhurnal éksperimental'noùi i Teoreticheskoùi Fiziki},
  24:622--629, 1953.

\bibitem{trakhinin05}
Y.~Trakhinin.
\newblock {Existence of compressible current-vortex sheets: Variable
  coefficients linear analysis.}
\newblock {\em Arch. Ration. Mech. Anal.}, 177(3):331--366, 2005.

\bibitem{trakhinin052}
Y.~Trakhinin.
\newblock On the existence of incompressible current-vortex sheets: study of a
  linearized free boundary value problem.
\newblock {\em Math. Methods Appl. Sci.}, 28(8):917--945, 2005.

\bibitem{trakhinin09arma}
Y.~Trakhinin.
\newblock The existence of current-vortex sheets in ideal compressible
  magnetohydrodynamics.
\newblock {\em Arch. Ration. Mech. Anal.}, 191(2):245--310, 2009.

\end{thebibliography}
\end{document}